\def\thm@space@setup{%
  \thm@preskip=2ex \thm@postskip=2ex
}
\numberwithin{equation}{section}
\theoremstyle{plain}
\newtheorem{thm}{Theorem~}[section] 
\newtheorem{lem}[thm]{Lemma~}
\newtheorem{prop}[thm]{Proposition~}
\theoremstyle{remark}
\newtheorem{rmk}[thm]{Remark~}
\theoremstyle{definition}
\newtheorem{defn}[thm]{Definition~}
\newcommand{\CC}{\mathbb{C}}
\newcommand{\ZZ}{\mathbb{Z}}
\newcommand{\PP}{\mathbb{P}}
\newcommand{\NN}{\mathbb{N}}
\newcommand{\calO}{\mathcal{O}}
\newcommand\PGL{\mathrm{PGL}}
\newcommand\diag{\mathrm{diag}}
\newcommand\GL{\mathrm{GL}}
\newcommand\ord{\mathrm{ord}}
\newcommand\Lin{\mathrm{Lin}}
\newcommand\lcm{\mathrm{lcm}}
\title{On Abelian Automorphism Groups of Hypersurfaces}
 \author[Z. Zheng]{Zhiwei Zheng}
\address{Max Planck Institute for Mathematics, Bonn}
\email{zhengzw11@mpim-bonn.mpg.de}
\date{}
\begin{document}
\bibliographystyle{amsalpha}

\begin{abstract} 
Given integers $d\ge 3$ and $N\ge 3$. Let $G$ be a finite abelian group acting faithfully and linearly on a smooth hypersurface of degree $d$ in the complex projective space $\PP^{N-1}$. Suppose $G\subset \PGL(N, \CC)$ can be lifted to a subgroup of $\GL(N, \CC)$. Suppose moreover that there exists an element $g$ in $G$ such that $G/\langle g\rangle$ has order coprime to $d-1$. Then all possible $G$ are determined (Theorem \ref{theorem: main}). As an application, we derive (Theorem \ref{theorem: order}) all possible orders of linear automorphisms of smooth hypersurfaces for any given $(d, N)$. In particular, we show (Proposition \ref{proposition: cubic}) that the order of an automorphism of a smooth cubic fourfold is a factor of $21,30,32,33,36$ or $48$, and each of those $6$ numbers is achieved by a unique (up to isomorphism) cubic fourfold.
\end{abstract}

\maketitle

%%%%%%%%%%%%%%%%%%%%%%%%%%%%%%%%%%%%%
%%% Intro

\section{Introduction}
\label{section: introduction}
Let $d\ge 3, N\ge 3$ be integers. We consider complex homogeneous polynomials of degree $d$ in $N$ variables. Such a polynomial $F$ defines a hypersurface $V(F)$ in the complex projective space $\PP^{N-1}$. We call such a hypersurface an $(N-2)$-fold of degree $d$. By Matsumura and Monsky \cite{matsumura1963automorphisms}, any regular automorphism of $V(F)$ is linear (namely, induced by a linear transformation of the ambient projective space) when $V(F)$ is smooth and $N\ge 4, (d,N)\ne (4,4)$. Denote by $\Lin(V(F))$ the group of linear automorphisms of $V(F)$. When $V(F)$ is smooth, the group $\Lin(V(F))$ is finite.

\smallskip

One of the goals of this paper is to determine all possible orders of linear automorphisms of smooth hypersurfaces for arbitrary $(d, N)$. A previous result by Gonz\'{a}lez-Aguilera and Liendo \cite{gonzalez2013order}\footnote{However, there is a small gap in the proof of Theorem 1.3 in \cite{gonzalez2013order}. See \cite{oguiso2019quintic} (Theorem 5.1) for a corrected statement.} classifies orders which are prime-powers coprime to $d(d-1)$. Their work is based on the fact that any automorphism of the projective space with finite order can be represented by a diagonal matrix after suitable choice of homogeneous coordinate. This classification is used by Oguiso and Yu in their work \cite{oguiso2019quintic}, which classifies maximal automorphism groups of smooth quintic threefolds. More recently, Wei and Yu \cite{AutCubic3} classified maximal automorphism groups of cubic threefolds. Both \cite{oguiso2019quintic} and \cite{AutCubic3} started with determining possible automorphism groups of small size (for example, cyclic groups and $p$-groups). Then they found all possible "combinations" (with the help of GAP software) of the small groups which result in automorphism groups. These works motivate us to study abelian automorphism groups of smooth hypersurfaces in a more general setting.
\smallskip

This paper first study abelian group actions on smooth hypersurfaces by extending Gonz\'{a}lez-Aguilera and Liendo's approach. More precisely, we will use the basic fact that any abelian subgroups of $\GL(N, \CC)$ are conjugate to subgroups of $(\CC^{\times})^N$. Our main results (Theorems \ref{theorem: abelian} and \ref{theorem: main}) classify all finite abelian groups $G$ such that $G$ can be split as $G=G_1\oplus G_2$ with $G_1$ cyclic and $\gcd(|G_2|, d-1)=1$, and $G$ admits a liftable (see Definition \ref{definition: liftable}), linear and faithful action on a smooth $(N-2)$-fold of degree $d$. 

\smallskip

We sketch our approach and explain the new ideas. Take a finite abelian group $G$ and a character $\lambda\colon G\to \CC^{\times}$. A group homomorphism $G\to (\CC^{\times})^N$ is equivalent to $N$ characters $\lambda_i\colon G\to \CC^{\times}$ for $i=1,\cdots, N$. We consider the set $S$ of monomials $\prod_{i=1}^N x_i^{\alpha_i}$ with $\sum_{i=1}^N \alpha_i=d$ and $\prod_{i=1}^N \lambda_i^{\alpha_i}=\lambda$. Then each complex linear combination of elements in $S$ gives rise to a polynomial $F(x_1, \cdots, x_N)$ of degree $d$, such that the hypersurface $V(F)\subset \PP^{N-1}_{x_1, \cdots, x_N}$ admits an action of $G$. When the data $(G, \lambda, \lambda_1, \cdots, \lambda_N)$ is suitably chosen, the set $S$ can give rise to smooth polynomials (we call a polynomial smooth if it defines a smooth hypersurface). Indeed, any liftable group actions on smooth hypersurfaces arise in this way. 
\smallskip

The main novelty of this paper is a choice of a set of very special smooth polynomials, which we call the simple polynomials. For us, a simple polynomial is the sum of finitely many polynomials of special shape that we call of type $K$ and $T$ (see Section \ref{section: notation} for a precise definition). We show that if some mild conditions about $G$ are satisfied and $G$ admits a liftable, linear and faithful action on a smooth hypersurface, then there exist characters $\lambda, \lambda_1, \cdots, \lambda_N\colon G\to \CC^{\times}$ such that the corresponding set $S$ contains the monomials of certain simple polynomial. In particular, the group $G$ is isomorphic to a subgroup of the linear automorphism group of the hypersurface defined by this simple polynomial. 
\smallskip

After we obtain our main results on abelian automorphism groups of hypersurfaces, we applied Theorem \ref{theorem: main} to prove Theorem \ref{theorem: order}, in which we classify all possible orders of linear automorphisms of smooth $(N-2)$-folds of degree $d$. We apply Theorem \ref{theorem: order} for cubic fourfolds (when $(d,N)=(3,6)$) and show that (Proposition \ref{proposition: cubic}) the order of an automorphism of a smooth cubic fourfold is a factor of $21, 30, 32, 33, 36$ or $48$. Moreover, we prove that these six numbers are achieved uniquely by smooth cubic fourfolds. We expect Theorem \ref{theorem: order} to be useful in further applications.
\smallskip

This paper is organized as follows. In Section \ref{section: notation} we introduce some notations and preliminary results. Especially, we define the simple polynomials and discuss their basic properties. Section \ref{section: simple polynomial} is devoted to the study of the diagonal automorphisms of simple hypersurfaces. Section \ref{section: abelian} is the main section, where we give the precise formulation and proof of Theorem \ref{theorem: abelian}, \ref{theorem: main} and \ref{theorem: order}. We end up with a case-study on cubic fourfolds in Section \ref{section: cubic}.
\smallskip

\noindent \textbf{Acknowledgement:} I am grateful to MPIM for hospitality and excellent research atmosphere. I thank Chenglong Yu for discussion on some examples, and Radu Laza for discussion on cubic fourfolds. After the posting of the first version of the manuscript, I learned from Gonz\'{a}lez-Aguilera, Liendo and Montero \cite{gonzalez2020lift} about the liftability property for automorphism groups of smooth hypersurfaces. I thank the authors of \cite{gonzalez2020lift} for sharing an early version of their work. Finally, I thank the referee for many helpful comments.

\section{Notation and Preliminary Results}
\label{section: notation}
\subsection{Automorphism Groups of Hypersurfaces}
Given integers $d\ge 3, N\ge 3$. All varieties considered in this paper are over the complex field $\CC$.  Let $F$ be a global section of $\calO(d)$ on $\PP^{N-1}$. Then $F$ defines an $(N-2)$-fold $V(F)$ of degree $d$ in $\PP^{N-1}$. For each choice of a homogeneous coordinate system $(x_1, \cdots, x_N)$ for $\PP^{N-1}$, the monomials in $x_1, \cdots, x_N$ of degree $d$ form a basis for the vector space $H^0(\PP^{N-1}, \calO(d))$. Thus $F$ can be uniquely expressed as a homogeneous polynomial in $x_1, \cdots, x_N$ of degree $d$. We simply denote this polynomial by $F(x_1, \cdots, x_N)$.
\smallskip

We call a homogeneous polynomial $F(x_1, \cdots, x_k)$ ($k\ge 1$) smooth if the only solution for $$\frac{\partial F}{\partial x_1}=\cdots=\frac{\partial F}{\partial x_k}=0$$ in $\CC^{k}$ is $(x_1, \cdots, x_k)=(0,\cdots, 0)$. For $k\ge 3$, a smooth homogeneous polynomial in $x_1, \cdots, x_k$ defines a smooth hypersurface in $\PP^{k-1}$. 
\smallskip

For $g\in \GL(N, \CC)$ and $F(x_1, \cdots, x_N)$ a polynomial of degree $d$, we define $g(F)=F\circ g^{-1}$. This gives rise to an action of $\GL(N, \CC)$ on the set of polynomials (in $x_1, \cdots, x_N$) of degree $d$. The class $[g]$ of $g$ in $\PGL(N, \CC)$ then sends $V(F)$ to $V(g(F))$. If $V(g(F))=V(F)$ (equivalently, $g(F)=\lambda F$ for certain $\lambda\in \CC^{\times}$), then $[g]$ defines a linear automorphism of $V(F)$. We define $\Lin(V(F))$ to be the group of linear automorphisms of $V(F)$. As we have mentioned in the introduction, by Matsumura and Monsky \cite{matsumura1963automorphisms}, the group $\Lin(V(F))$ is finite when $F$ is smooth. Moreover, when $(d, N)\ne (4,4)$ and $F$ is smooth, any regular automorphism of the variety $V(F)$ is indeed linear. We will need the following definitions.

\begin{defn}
\label{definition: liftable}
Given a linear action of a group $G$ on $\PP^{N-1}$ (equivalently, given a projective representation $G\to \PGL(N, \CC)$ of the group $G$). We say this action is liftable, if there exists a group homomorphism $G\to \GL(N, \CC)$, such that its composition with the natural projection $\GL(N, \CC)\to \PGL(N, \CC)$ is the given projective representation. We call such a homomorphism $G\to \GL(N, \CC)$ a lifting for the action.
\end{defn}

\begin{defn}
Given a linear action of a group $G$ on $\PP^{N-1}$. Suppose 
\begin{enumerate}[(a)]
\item the action is liftable, and 
\item there exist a lifting $j\colon G\to \GL(N, \CC)$, a hypersurface $V(F)$ in $\PP^{N-1}$ and a character $\lambda\colon G\to \CC^{\times}$ such that $j(g)(F)=\lambda(g)F$ for any $g\in G$. 
\end{enumerate}
Then we say (the action of) $G$ is $(F, \lambda)$-liftable. 
\end{defn}

It is straightforward to check that any linear action of a cyclic group on $\PP^{N-1}$ is liftable. In our main results (Theorems \ref{theorem: abelian} and \ref{theorem: main}), liftabilities of abelian group actions are always required. See Remark \ref{remark: lift} for more discussion.

\subsection{Simple Polynomials and Simple Hypersurfaces}
For integers $d\ge 3$ and $k\ge 1$, we call the polynomial $x_1^{d-1}x_2+x_2^{d-1}x_3+\cdots+x_k^{d-1}x_1$ of type $K_k$ (here the letter $K$ stands for Klein), and the polynomial $x_1^{d-1}x_2+\cdots+x_{k-1}^{d-1}x_k+x_k^d$ of type $T_k$. A simple polynomial is the sum of finitely many polynomials of type $K$ or $T$ with independent variables and the same degree. 
\smallskip

It is straightforward to show that simple polynomials of degree at least $3$ are smooth. We call a hypersurface in the projective space a simple hypersurface, if it is defined by a simple polynomial. We write the type of a simple polynomial as direct sum of the types of each component. For a simple polynomial, if all of its components are of type $K$, then we call it $K$-pure. For example, the Fermat polynomial $x_1^d+\cdots+x_k^d$ is of type $K_1^{\oplus k}$, and is $K$-pure.

\smallskip

Let $F=F(x_1, \cdots, x_k)$ be a smooth polynomial of degree $d$. Then for each $i\in\{1,\cdots, k\}$, there exists a monomial with nonzero coefficient in $F$ such that the multiplicity of $x_i$ in this monomial is at least $d-1$ (see \cite{gonzalez2013order}, Lemma 1.2). For any monomial of degree $d$, there is at most one $x_i$ with multiplicity at least $d-1$. This leads us to consider polynomials of the form 
\begin{equation*}
F_I(x_1, \cdots, x_k)=x_1^{d-1} x_{i_1}+\cdots+x_N^{d-1} x_{i_k}
\end{equation*} 
where $I=(d, \overline{i})=(d, i_1, \cdots, i_k)\in \NN^{\ge 3}\times\{1,\cdots, k\}^k$. 

\begin{prop}
\label{proposition: smooth criterion}
Given $I=(d, i_1, \cdots, i_k)\in \NN^{\ge 3}\times\{1,\cdots, k\}^k$, the following three statements are equivalent:
\begin{enumerate}[(1)]
\item The polynomial $F_I$ is smooth.
\item We cannot find $1\le a<b\le k$ such that $i_a=i_b$ and $i_a\notin\{a,b\}$. 
\item The polynomial $F_I$ is simple.
\end{enumerate}
\end{prop}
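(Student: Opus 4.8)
The plan is to prove the cycle of implications $(2)\Rightarrow(3)\Rightarrow(1)\Rightarrow(2)$, which is the most economical way to get all three equivalences at once. The implication $(3)\Rightarrow(1)$ is free: $F_I$ has degree $d\ge 3$, and the text has already recorded that simple polynomials of degree at least $3$ are smooth. So the two substantive steps are the combinatorial classification $(2)\Rightarrow(3)$ and the construction of a nonzero singular point for $(1)\Rightarrow(2)$ in contrapositive form. Throughout I would encode the datum $I=(d,i_1,\ldots,i_k)$ as the \emph{functional graph} $\Gamma$ on vertices $\{1,\ldots,k\}$ with one directed edge $a\to i_a$ for each $a$, so that the monomial $x_a^{d-1}x_{i_a}$ of $F_I$ records the edge out of $a$. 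In this dictionary a type-$K_m$ piece is a pure directed $m$-cycle, a type-$T_m$ piece is a directed path ending in a self-loop, and a simple polynomial is precisely a disjoint union of such pieces.

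The first thing to record is a reformulation of $(2)$. The forbidden configuration $i_a=i_b=c$ with $a<b$ and $c\notin\{a,b\}$ is exactly a vertex $c$ receiving two distinct incoming edges, \emph{neither} of which is its self-loop. A short case analysis then shows that $(2)$ is equivalent to the statement that every vertex of $\Gamma$ has in-degree at most $2$, with in-degree exactly $2$ occurring only at a self-loop: in-degree $\ge 3$ always produces two incoming edges avoiding the self-loop, and in-degree $2$ at a non-self-loop is itself the forbidden pattern. With this in hand I would prove $(2)\Rightarrow(3)$ component by component, using the standard fact that each connected component of a functional graph contains a unique cycle fed by trees. If that cycle has length $\ge 2$, the in-degree bound forbids any extra edge into a cycle vertex and also forbids any vertex outside the cycle (its entry point would raise an in-degree), so the component is a pure cycle of type $K$. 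If the cycle is a self-loop at $c$, then every vertex other than $c$ has in-degree $\le 1$ and $c$ has at most one incoming edge besides its loop, which forces the component to be a single path into $c$, i.e.\ of type $T$. Since distinct components use disjoint variables, $F_I$ is a sum of $K$- and $T$-pieces in independent variables, hence simple.

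For $(1)\Rightarrow(2)$ I would argue the contrapositive. Assuming $i_a=i_b=c$ with $a<b$ and $c\notin\{a,b\}$, I would exhibit a nonzero common zero of all partials. Computing $\partial F_I/\partial x_m=(d-1)x_m^{d-2}x_{i_m}+\sum_{j:\,i_j=m}x_j^{d-1}$, I would test the point supported on $\{a,b\}$, with every other coordinate equal to $0$. At such a point the first summand vanishes in every partial: for $m\in\{a,b\}$ it carries the factor $x_c=0$, and for the remaining $m$ it carries $x_m^{d-2}=0$ (here $d\ge 3$ is used). The only surviving constraint comes from the second summand at $m=c$, namely $x_a^{d-1}+x_b^{d-1}=0$; taking $x_a=1$ and $x_b$ a $(d-1)$-st root of $-1$ produces a nonzero singular point, so $F_I$ is not smooth, completing the cycle.

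The main obstacle is conceptual rather than computational: pinning down the graph reformulation of $(2)$ exactly right, in particular the asymmetric role of self-loops as the \emph{only} vertices permitted in-degree $2$, and then checking carefully that the in-degree bound simultaneously rules out branching trees, trees hanging off a long cycle, and long cycles coexisting with outside vertices. Once the functional-graph dictionary and the reformulation of $(2)$ are fixed, both the classification and the singular-point construction are routine.
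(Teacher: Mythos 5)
Your proposal is correct and follows essentially the same route as the paper: the same cycle of implications, with $(3)\Rightarrow(1)$ quoted from the smoothness of simple polynomials, $(2)\Rightarrow(3)$ via the same directed functional graph whose in-degree constraint (at most $2$, and $2$ only at a self-loop) forces each component to be a pure cycle (type $K$) or a path into a self-loop (type $T$), and $(1)\Rightarrow(2)$ by the contrapositive using exactly the paper's singular point supported on $\{a,b\}$, namely $x_a=1$, $x_b=\exp\bigl(\pi\sqrt{-1}/(d-1)\bigr)$, all other coordinates zero. Your verification of the vanishing partials is in fact more detailed than the paper's ``straightforward to check,'' and your explicit formula for $\partial F_I/\partial x_m$ correctly covers the self-loop case, so there is no gap.
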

\begin{proof}
By definition of simpleness, the statement $(3)$ implies $(1)$. We next show $(1)$ implies $(2)$. Suppose there exist $1\le a<b\le k$ such that $i_a=i_b$ and $i_a\notin\{a,b\}$. Without loss of generality, we assume that $a=1, b=2$ and $i_a=i_b=3$. We claim that $V(F_I)$ is singular at the point $p=[1: \mathrm{exp}(\frac{\pi\sqrt{-1}}{d-1}):0:\cdots:0]$. Indeed, it is straightforward to check that $\frac{\partial F}{\partial x_i}(p)=0$ for any $i=1, 2, \cdots, N$. Therefore, if $F_I$ is smooth, then $(2)$ holds.
\smallskip

Next we prove that $(2)$ implies $(3)$. Suppose we cannot find $1\le a<b\le k$ such that $i_a=i_b$ and $i_a\notin\{a,b\}$. We consider a directed graph $H$ with $1,\cdots, k$ being the vertices and $a\to i_a$ ($a=1,\cdots, N$) being the edges. Notice that each vertex of $H$ is the beginning point of exactly one edge, and each vertex is the ending point of at most two edges. Moreover, due to the assumption, if one vertex is the ending point of two edges, then one of the edges is a circle based on that vertex. These restrictions immediately imply that the components of $H$ are of types
\begin{equation*}
\label{diagram: A}
\begin{tikzcd}
\bullet\arrow{r} &\bullet\arrow{r} & \cdots\arrow{r} &\bullet\arrow[bend left]{lll}
\end{tikzcd}
\end{equation*}
or
\begin{equation*}
\label{diagram: B}
\begin{tikzcd}
\bullet\arrow{r} &\bullet\arrow{r} & \cdots\arrow{r} &\bullet\arrow[loop]
\end{tikzcd}
\end{equation*}
Thus $F_I$ is a simple polynomial.
\end{proof}

\section{Automorphisms of Simple Hypersurfaces}
\label{section: simple polynomial}
In this section we are given integers $d\ge 3, N\ge 2$. For $\lambda_1, \cdots, \lambda_N\in \CC^{\times}$, we denote by $\diag[\lambda_1:\cdots:\lambda_N]$ the class of the diagonal matrix $\diag(\lambda_1, \cdots, \lambda_N)$ in $\PGL(N, \CC)$. The element $\diag[\lambda_1:\cdots:\lambda_N]$ is acting on $\PP^{N-1}_{x_1, \cdots, x_N}$, sending $[x_1:\cdots:x_N]$ to $[\lambda_1 x_1:\cdots:\lambda_N x_N]$. We call this a diagonal automorphism. For a smooth polynomial $F(x_1, \cdots, x_N)$, we define $G_F$ to be the group of diagonal automorphisms $\diag [\lambda_1: \cdots : \lambda_N]$ such that
\begin{equation*}
F(\lambda_1 x_1, \cdots, \lambda_N x_N)=\lambda F(x_1, \cdots, x_N)
\end{equation*}
for certain $\lambda\ne 0$. The group $G_F$ is a subgroup of $\Lin(V(F))$, hence a finite abelian group. 

\smallskip

Next we characterize the groups $G_F$ for any simple polynomials $F$. We first consider polynomials of type $K$ or type $T$.
\begin{lem}
\label{lemma: klein}
Let $F(x_1, \cdots, x_N)=x_1^{d-1}x_2+\cdots+x_N^{d-1}x_1$ be the polynomial of degree $d$ and type $K_N$. Then $G_F$ is a cyclic group of order $\frac{|1-(1-d)^N|}{d}$ with a generator given by $\diag [\zeta:\zeta^{1-d}:\zeta^{(1-d)^2}:\cdots:\zeta^{(1-d)^{N-1}}]$ where $\zeta$ is a primitive $|1-(1-d)^N|$-root of unity.
\end{lem}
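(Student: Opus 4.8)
The plan is to translate membership in $G_F$ into an explicit system of monomial equations, reduce it to a single cyclic closure condition using the projective normalization, and then read off both the order and a generator from that condition.

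First I would write out what it means for $\diag[\lambda_1:\cdots:\lambda_N]$ to lie in $G_F$. Since $F(\lambda_1 x_1,\dots,\lambda_N x_N)=\sum_{i=1}^N \lambda_i^{d-1}\lambda_{i+1}\, x_i^{d-1}x_{i+1}$ with indices read cyclically ($x_{N+1}=x_1$), the condition $F(\lambda_1 x_1,\dots,\lambda_N x_N)=\lambda F$ is equivalent to the system $\lambda_i^{d-1}\lambda_{i+1}=\lambda$ for $i=1,\dots,N$. Using the projective freedom I would pass to the unique representative of each class with $\lambda_1=1$; then $\lambda=\lambda_2$, and writing $\lambda_i=\lambda_2^{f_i}$ the equations for $i=1,\dots,N-1$ become the integer recurrence $(d-1)f_i+f_{i+1}=1$ with initial value $f_1=0$.

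The next step is to solve this recurrence. Its unique fixed point is $1/d$, so $f_i-\tfrac1d=(1-d)^{i-1}(f_1-\tfrac1d)$, giving the closed form $f_i=\frac{1-(1-d)^{i-1}}{d}$; since $1-d\equiv 1\pmod d$ one checks $d\mid 1-(1-d)^{i-1}$, so each $f_i$ is a genuine integer. The only remaining constraint is the cyclic closure coming from the last equation $i=N$, which amounts to $\lambda_2^{f_{N+1}}=1$ with $f_{N+1}=\frac{1-(1-d)^N}{d}$. Hence $G_F$ is identified, via $\lambda_2$, with the group of $\big|\tfrac{1-(1-d)^N}{d}\big|$-th roots of unity, so it is cyclic of order $\frac{|1-(1-d)^N|}{d}$.

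Finally I would confirm that the displayed element $g=\diag[\zeta:\zeta^{1-d}:\cdots:\zeta^{(1-d)^{N-1}}]$ is the asserted generator: a direct substitution shows $g\in G_F$ (the first $N-1$ transformed coefficients collapse to $\zeta^0$, and the last to $\zeta^{\,1-(1-d)^N}=1$ by primitivity of $\zeta$), and computing the order of $g$ in $\PGL$ reduces to finding the smallest $t$ with $|1-(1-d)^N|$ dividing $t\big((1-d)^{i-1}-(1-d)^{j-1}\big)$ for all $i,j$. Since the gcd of these differences is exactly $d$ and $d\mid 1-(1-d)^N$, this order equals $\frac{|1-(1-d)^N|}{d}$, matching $|G_F|$, so $g$ generates. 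I expect the main subtlety to be the appearance of the factor $d$: it enters both through the degree-$d$ homogeneity (the scalar $\lambda$ rescales as $t^d$ under the normalization) and through the integrality of the $f_i$, and keeping this bookkeeping straight — rather than any hard estimate — is the real content of the argument.
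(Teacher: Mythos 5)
Your proposal is correct and takes essentially the same route as the paper: both reduce membership in $G_F$ to the system $\lambda_i^{d-1}\lambda_{i+1}=\lambda$ and solve the resulting geometric recursion, the only difference being that you normalize $\lambda_1=1$ and track integer exponents $f_i=\frac{1-(1-d)^{i-1}}{d}$ of $\lambda_2$ (an $\frac{|1-(1-d)^N|}{d}$-th root of unity), whereas the paper rescales the lift so the invariance scalar is $1$ and gets $\lambda_i=\lambda_1^{(1-d)^{i-1}}$ with $\lambda_1$ an $|1-(1-d)^N|$-th root of unity, then quotients by the scalar $d$-th roots of unity implicitly. A minor merit of your write-up is that it spells out the computation $\ord(g)=\frac{|1-(1-d)^N|}{d}$ in $\PGL$ via the gcd of the exponent differences being exactly $d$, which the paper asserts without detail; your only slip is cosmetic: $\zeta^{1-(1-d)^N}=1$ needs merely that $\zeta$ is an $|1-(1-d)^N|$-th root of unity, with primitivity used only in the order computation.
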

\begin{proof}
The element $g=\diag [\zeta:\zeta^{1-d}:\zeta^{(1-d)^2}:\cdots:\zeta^{(1-d)^{N-1}}]$ is a linear automorphism of $V(F)$ of order $\frac{|1-(1-d)^N|}{d}$. Denote by $\langle g\rangle$ the cyclic group generated by $g$. Then we have a group inclusion $\langle g\rangle\hookrightarrow G_F$, which is an isomorphism as we are going to show.
\smallskip

Take $h\in G_F$ and choose one of its preimages $\widetilde{h}=\diag(\lambda_1, \cdots, \lambda_N)$ in $\GL(N, \CC)$ such that $\widetilde{h} (F)=F$. From $\widetilde{h} (x_1^{d-1}x_2)=x_1^{d-1}x_2$ we have $\lambda_1^{d-1}\lambda_2=1$, hence $\lambda_2=\lambda_1^{1-d}$. Accordingly, we have $\lambda_i=\lambda_1^{(1-d)^{i-1}}$ for any $i=1, \cdots, N$. From $\lambda_N^{d-1}\lambda_1=1$ we have $\lambda_1^{1-(1-d)^N}=1$, namely $\lambda_1$ is a $|1-(1-d)^N|$-root of unity. Thus $h\in \langle g\rangle$. We conclude that $G_F=\langle g\rangle$.
\end{proof}

\begin{lem}
\label{lemma: type T}
Let $F(x_1, \cdots, x_N)=x_1^{d-1}x_2+\cdots+x_{N-1}^{d-1}x_N+x_N^d$ be the polynomial of  degree $d$ and type $T_N$. Then $G_F$ is a cyclic group of order $(d-1)^{N-1}$ with a generator given by $\diag [\zeta:\zeta^{1-d}:\zeta^{(1-d)^2}:\cdots:\zeta^{(1-d)^{N-1}}]$, where $\zeta$ is a primitive $(d-1)^{N-1}$-root of unity.
\end{lem}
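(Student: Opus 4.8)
The plan is to follow the template of Lemma \ref{lemma: klein} (the type $K_N$ case), the only genuinely new feature being the pure power $x_N^d$ and the projective normalization it forces. First I would verify directly that $g=\diag[\zeta:\zeta^{1-d}:\cdots:\zeta^{(1-d)^{N-1}}]$ lies in $G_F$. Writing $\lambda_i=\zeta^{(1-d)^{i-1}}$, each ``Klein'' monomial $x_i^{d-1}x_{i+1}$ (for $1\le i\le N-1$) is scaled by $\lambda_i^{d-1}\lambda_{i+1}=\zeta^{(d-1)(1-d)^{i-1}+(1-d)^i}$, whose exponent vanishes because $(d-1)(1-d)^{i-1}=-(1-d)^i$. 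The pure power $x_N^d$ is scaled by $\lambda_N^d=\zeta^{d(1-d)^{N-1}}$, which equals $1$ precisely because $\zeta$ is a $(d-1)^{N-1}$-th root of unity and $(d-1)^{N-1}\mid d(1-d)^{N-1}$. Hence $g(F)=F$, so $g\in G_F$, and a short computation as in the Klein case shows its order is exactly $(d-1)^{N-1}$: $g^k$ is trivial in $\PGL(N,\CC)$ iff all entries $\zeta^{k(1-d)^{i-1}}$ coincide, which forces $\zeta^{kd}=1$, hence $(d-1)^{N-1}\mid kd$, and then $(d-1)^{N-1}\mid k$ since $\gcd(d,d-1)=1$.

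For the reverse inclusion I would take $h\in G_F$ with a preimage $\widetilde h=\diag(\lambda_1,\cdots,\lambda_N)$ satisfying $\widetilde h(F)=\mu F$. Since $F$ is homogeneous of degree $d$, replacing $\widetilde h$ by $c\widetilde h$ rescales $\mu$ by $c^{-d}$, so after choosing a suitable $d$-th root I may normalize $\mu=1$. Matching monomial coefficients then gives $\lambda_i^{d-1}\lambda_{i+1}=1$ for $1\le i\le N-1$, so $\lambda_i=\lambda_1^{(1-d)^{i-1}}$, together with the new relation $\lambda_N^d=1$ coming from $x_N^d$. Substituting the former into the latter yields $\lambda_1^{d(1-d)^{N-1}}=1$, i.e.\ $\lambda_1$ is a $d(d-1)^{N-1}$-th root of unity.

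The key step, where this lemma differs from the Klein case, is converting this into a count of \emph{projective} classes. The normalization $\mu=1$ still leaves a residual rescaling freedom by $c$ with $c^d=1$, and these $d$-th-root rescalings preserve the normalization while giving the same element of $\PGL(N,\CC)$. Concretely I would define the homomorphism from the group of $d(d-1)^{N-1}$-th roots of unity to $G_F$ sending $\lambda_1$ to the class of $\diag(\lambda_1,\lambda_1^{1-d},\cdots,\lambda_1^{(1-d)^{N-1}})$; the computation above shows it is surjective, and its kernel is exactly the group of $d$-th roots of unity (such a diagonal matrix is scalar iff $\lambda_1^d=1$, using $(1-d)^{i-1}\equiv 1 \pmod d$). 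Therefore $G_F$ is cyclic of order $\tfrac{d(d-1)^{N-1}}{d}=(d-1)^{N-1}$. Finally, $\gcd(d,d-1)=1$ makes the subgroup of $(d-1)^{N-1}$-th roots of unity meet the kernel trivially, so it already surjects onto the quotient; hence $g$, corresponding to a primitive $(d-1)^{N-1}$-th root $\zeta$, generates $G_F$. The main obstacle is precisely this bookkeeping around $x_N^d$: it introduces the extra factor of $d$, and only the coprimality $\gcd(d,d-1)=1$ rescues both the cyclicity and the clean order $(d-1)^{N-1}$.
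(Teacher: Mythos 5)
Your proof is correct and takes essentially the same route as the paper, which verifies directly that $g$ lies in $G_F$ with order $(d-1)^{N-1}$ and then disposes of the reverse inclusion by invoking the argument of Lemma \ref{lemma: klein} (deriving $\lambda_i=\lambda_1^{(1-d)^{i-1}}$ from the Klein monomials and the extra relation $\lambda_N^d=1$ from $x_N^d$). Your explicit bookkeeping of the residual rescaling by $d$-th roots of unity --- the kernel of your homomorphism, eliminated via $\gcd(d,d-1)=1$ --- is precisely the detail the paper compresses into the phrase ``same proof as Lemma \ref{lemma: klein}.''
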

\begin{proof}
The element $g=\diag [\zeta:\zeta^{1-d}:\zeta^{(1-d)^2}:\cdots:\zeta^{(1-d)^{N-1}}]$ is a linear automorphism of $V(F)$ of order $(d-1)^{N-1}$. We have the group inclusion $\langle g\rangle\hookrightarrow G_F$. Same proof as Lemma \ref{lemma: klein} implies that $G_F=\langle g\rangle$.
\end{proof}

\begin{prop}
\label{proposition: simple}
Let $F(x_1, \cdots, x_N)$ be a simple polynomial of degree $d$ and type $K_{a_1}\oplus\cdots\oplus K_{a_t}\oplus T_{b_1}\oplus\cdots\oplus T_{b_s}$, with $t, s$ being non-negative integers. Then there is an isomorphism of finite abelian groups:
\begin{equation}
\label{equation: GF}
G_F\cong (\prod_{i=1}^t \ZZ/(|1-(1-d)^{a_i}|)\ZZ\times \prod_{j=1}^s \ZZ/(d(d-1)^{b_j-1})\ZZ)/\langle (u_1, \cdots, u_t, v_1, \cdots, v_s)\rangle.
\end{equation}
Here $u_i, v_j$ are elements in the corresponding summands with orders equal to $d$.

\end{prop}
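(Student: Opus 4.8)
The plan is to work entirely with honest diagonal matrices that fix $F$ \emph{on the nose} (rather than merely up to a scalar), and to put the scalars back in only at the very end. Write $F=\sum_i F_i$ as the sum of its blocks, indexed so that the first $t$ blocks are of type $K_{a_i}$ and the remaining $s$ are of type $T_{b_j}$, each block involving its own set of variables, pairwise disjoint across blocks. For each block I would introduce the group $\widehat{G}_{F_i}\subset(\CC^\times)^{m_i}$ (where $m_i$ is the number of variables in the block) of diagonal matrices satisfying $F_i(\lambda x)=F_i(x)$ exactly. The proofs of Lemma~\ref{lemma: klein} and Lemma~\ref{lemma: type T} already perform this computation: in each case the first coordinate $\lambda_1$ ranges over the full group of roots of unity of order $|1-(1-d)^{a_i}|$ (type $K$) or $d(d-1)^{b_j-1}$ (type $T$), while every other coordinate is forced by $\lambda_m=\lambda_1^{(1-d)^{m-1}}$. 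Hence $\widehat{G}_{F_i}$ is cyclic of exactly the order appearing as the corresponding factor in \eqref{equation: GF}.

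The key local observation, and the heart of the argument, is the identification of the unique order-$d$ element of $\widehat{G}_{F_i}$. Since $(1-d)\equiv 1 \pmod d$, we have $(1-d)^{m-1}\equiv 1\pmod d$ for every $m$; therefore if $\lambda_1$ has order dividing $d$ then $\lambda_m=\lambda_1^{(1-d)^{m-1}}=\lambda_1$ for all $m$, so the matrix is the scalar matrix $\lambda_1\cdot\Id$. Consequently the order-$d$ cyclic subgroup of $\widehat{G}_{F_i}$ consists precisely of scalar matrices and is generated by $\zeta_d\cdot\Id$, where $\zeta_d$ is a primitive $d$-th root of unity; this generator is exactly the element named $u_i$ (respectively $v_j$) in the statement.

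Next I would assemble the blocks. Because the blocks use disjoint variables, a diagonal matrix fixes $F$ exactly if and only if its restriction to each block fixes the corresponding $F_i$ exactly, so the group $\widehat{G}_F$ of diagonal matrices fixing $F$ on the nose equals the direct product $\prod_i\widehat{G}_{F_i}$, which is precisely the product group on the right-hand side of \eqref{equation: GF}. I then claim that the image of $\widehat{G}_F$ in $\PGL(N,\CC)$ is exactly $G_F$: the inclusion of the image into $G_F$ is immediate, and conversely, given $[\lambda]\in G_F$ with $F(\lambda x)=\lambda_0 F(x)$, choosing $c\in\CC^\times$ with $c^d=\lambda_0^{-1}$ makes $c\lambda$ fix $F$ exactly while representing the same class $[\lambda]$. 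Thus the projection $(\CC^\times)^N\to\PGL(N,\CC)$ restricts to a surjection $\widehat{G}_F\twoheadrightarrow G_F$.

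Finally I would compute the kernel of this surjection, namely the intersection of $\widehat{G}_F$ with the scalar matrices. A scalar matrix $c\cdot\Id$ fixes $F$ exactly iff $c^d=1$, so the kernel is $\{c\cdot\Id : c^d=1\}$, cyclic of order $d$. Viewed inside $\prod_i\widehat{G}_{F_i}$, this is the diagonally embedded copy of the $d$-th roots of unity, generated by $(\zeta_d\Id,\dots,\zeta_d\Id)$, which by the second paragraph is exactly $(u_1,\dots,u_t,v_1,\dots,v_s)$. The first isomorphism theorem then gives $G_F\cong\bigl(\prod_i\widehat{G}_{F_i}\bigr)/\langle(u_1,\dots,u_t,v_1,\dots,v_s)\rangle$, which is \eqref{equation: GF}. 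I expect the genuine obstacle to be the bookkeeping of the second paragraph: one must pin down that the order-$d$ element of each block group is a true scalar matrix common to all blocks, so that the single relation glues the cyclic factors along one diagonal copy of the $d$-th roots of unity; the remaining steps are either recorded in the two lemmas or a formal application of the isomorphism theorem.
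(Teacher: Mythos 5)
Your proof is correct, and while it shares the paper's overall skeleton --- realize the right-hand side of \eqref{equation: GF} as a group of diagonal matrices fixing $F$ on the nose, map it to $G_F$, and identify the kernel with the scalar $d$-th roots of unity --- it handles the surjectivity step by a genuinely different and more economical mechanism. The paper builds the group $\widetilde{G}$ from explicit generators $g_i, h_j$ and proves surjectivity by restricting an arbitrary $f\in G_F$ to each block's projective subspace, invoking Lemmas \ref{lemma: klein} and \ref{lemma: type T} to correct $f$ by an element of $\widetilde{G}$, and then arguing separately that the residual diagonal element (constant on each block, with $d$-th-root-of-unity scalars) is again a product of prescribed powers of the $g_i, h_j$. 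You instead characterize the full exact stabilizer $\widehat{G}_F$ at the outset: because the blocks involve disjoint variables, and hence disjoint monomials, coefficient comparison gives $\widehat{G}_F=\prod_i\widehat{G}_{F_i}$ with each factor cyclic of the asserted order (a computation genuinely contained in the proofs of the two lemmas), and surjectivity onto $G_F$ then follows from the one-line rescaling observation: if $F(\lambda x)=\lambda_0 F(x)$, replace $\lambda$ by $c\lambda$ with $c^d=\lambda_0^{-1}$. This eliminates the paper's two-paragraph correction argument entirely. Your identification of the order-$d$ subgroup of each block group with the scalar matrices, via $(1-d)^{m-1}\equiv 1\pmod{d}$, is precisely the fact the paper establishes by exhibiting $g_i^{|1-(1-d)^{a_i}|/d}$ and $h_j^{(d-1)^{b_j-1}}$ explicitly (note the same congruence also gives $d\mid |1-(1-d)^{a_i}|$, so the order-$d$ subgroup of each $K$-factor indeed exists --- worth one sentence in a final write-up), so the two kernel computations agree and both exhibit the kernel generator as $(u_1,\dots,u_t,v_1,\dots,v_s)=\zeta_d\cdot\Id$. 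What the paper's route buys is explicit generator bookkeeping in $\GL(N,\CC)$, which is convenient for writing down concrete automorphisms later (as in Table \ref{table: cubic fourfold}); what yours buys is a shorter and more conceptual proof in which the only nontrivial input is the single-block computation.
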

\begin{proof}
Let $F=F_1+\cdots+F_{t+s}$ where $F_i$'s have independent variables, $F_i$ ($i=1, \cdots, t$) are of type $K_{a_i}$ and $F_{t+j}$ ($j=1, \cdots, s$) are of type $T_{b_j}$. For convenience, we denote the $N$ variables $x_1, \cdots, x_N$ in a new way as follows. Let the variables appearing in $F_i$ ($i=1, \cdots, t$) be $x_{i,1}, \cdots, x_{i, a_i}$, and the variables appearing in $F_{t+j}$ ($j=1, \cdots, s$) be $x_{t+j,1}, \cdots, x_{t+j, b_j}$.

\smallskip

Now we define $t+s$ elements $g_1, \cdots, g_t, h_1, \cdots, h_s\in \GL(N, \CC)$ as follows. For $i=1, \cdots, t$, we take $\zeta_i=\mathrm{exp}(\frac{2\pi\sqrt{-1}}{|1-(1-d)^{a_i}|})$. Let $g_i(x_{i,k})=\zeta_i^{(1-d)^{k-1}}x_{i,k}$ for $k=1, \cdots, a_i$, and $g_i(x_{i',k})=x_{i',k}$ for $i'\ne i$. Then $g_i F=F$ and $g_i$ generate a subgroup isomorphic to $\ZZ/(|1-(1-d)^{a_i}|)\ZZ$ in $\GL(N, \CC)$. For $j=1, \cdots, s$, we take $\xi_j=\mathrm{exp}(\frac{2\pi\sqrt{-1}}{d(d-1)^{b_j-1}})$. Let $h_j(x_{t+j, k})=\xi_j^{(1-d)^{k-1}}x_{t+j, k}$ for $k=1, \cdots, b_j$, and $h_j(x_{i',k})=x_{i',k}$ for $i'\ne t+j$. Then $h_j F=F$ and $h_j$ generate a subgroup isomorphic to $\ZZ/(d(d-1)^{b_j-1})\ZZ$ in $\GL(N, \CC)$. The $t+s$ elements $g_1, \cdots, g_t, h_1, \cdots, h_s$ generate a subgroup $\widetilde{G}\subset \GL(N, \CC)$ which is isomorphic to $\prod_{i=1}^t \ZZ/(|1-(1-d)^{a_i}|)\ZZ\times \prod_{j=1}^s \ZZ/(d(d-1)^{b_j-1})\ZZ$. Since the group $\widetilde{G}$ leaves $F$ invariant, we have a group homomorphism $\Pi\colon\widetilde{G}\longrightarrow G_F$. We next describe the kernel of $\Pi$ and show that $\Pi$ is surjective.

\smallskip

Let $\varphi$ be an element in the kernel of $\Pi$. Then $\varphi=\diag(\rho, \cdots, \rho)$ for certain $\rho \in \CC^{\times}$. From the definition of $\widetilde{G}$, we can write $\varphi=u_1\cdots u_t v_1\cdots v_s$ for $u_i=g_i^{\alpha_i}$, and $v_j=h_j^{\beta_j}$. Since $u_i$ acts with the same scalar on $x_{i,k}$ for $k=1, \cdots, a_i$, we must have $\frac{|1-(1-d)^{a_i}|}{d}\mid \alpha_i$, and the order of $u_i$ is either $d$ or $1$. Similarly, we  have $(d-1)^{b_j-1}\mid \beta_j$ and the order of $v_j$ is either $d$ or $1$. Thus $\rho$ is a $d$-root of unity. The kernel of $\Pi$ is generated by the element 
\begin{equation*}
\diag(\mathrm{exp}(\frac{2\pi\sqrt{-1}}{d}), \cdots, \mathrm{exp}(\frac{2\pi\sqrt{-1}}{d}))=u_1\cdots u_t v_1\cdots v_s
\end{equation*}
with $u_i=g_i^{\alpha_i}, v_j=h_j^{\beta_j}$, $\alpha_i=\frac{|1-(1-d)^{a_i}|}{d}$ and $\beta_j=(d-1)^{b_j-1}$.

\smallskip

Take $f\in G_F$. The restriction $f_i$ of $f$ to $\PP_{x_{i,1}, \cdots, x_{i, a_i}}^{a_i-1}$ is a linear automorphism of $V(F_i)$, namely, $f_i\in G_{F_i}$. By Lemma \ref{lemma: klein}, the restriction of $[g_i]$ to $\PP_{x_{i,1}, \cdots, x_{i, a_i}}^{a_i-1}$ generates $G_{F_i}$. By Lemma \ref{lemma: type T}, the restriction of $[h_j^d]$ to $\PP_{x_{t+j,1}, \cdots, x_{t+j, b_j}}^{b_i-1}$ generates $G_{F_{t+j}}$. Therefore, we can choose an element $f_1\in \widetilde{G}$ such that the restrictions of $f [f_1]^{-1}$ to  $\PP_{x_{i,1}, \cdots, x_{i, a_i}}^{a_i-1}$ and $\PP_{x_{t+j,1}, \cdots, x_{t+j, b_j}}^{b_i-1}$ are trivial for all $i=1, \cdots, t$ and $j=1, \cdots, s$. Take a preimage $\widetilde{f}\in \GL(N, \CC)$ of $f [f_1]^{-1}$ such that $\widetilde{f} F=F$. Then $\widetilde{f}$ is diagonal with all eigenvalues being $d$-roots of unity. Moreover, $\widetilde{f}$ acts with the same scalar on $x_{i,k}$ for fixed $i$. Notice that $g_i^{\frac{|1-(1-d)^{a_i}|}{d}}$ acts with scalar $\mathrm{exp}(\frac{2\pi\sqrt{-1}}{d})$ on $x_{i,1}, \cdots, x_{i, a_i}$, for $i=1, \cdots, t$. And $h_j^{(d-1)^{b_j-1}}$ acts with scalar $\mathrm{exp}(\frac{2\pi\sqrt{-1}}{d})$ on $x_{t+j, 1}, \cdots, x_{t+j, b_j}$, for $j=1, \cdots, s$. Therefore, the element $\widetilde{f}$ is generated by $g_1, \cdots, g_t, h_1, \cdots, h_s$ in $\GL(N, \CC)$. Thus $\widetilde{f}\in \widetilde{G}$. This implies that $f=\Pi(\widetilde{f}f_1)$ lies in the image of $\Pi$. We conclude that $\Pi$ is surjective. We then have equation \eqref{equation: GF} and the proposition follows.
\end{proof}

We end up this section with a lemma which will be used (together with Proposition \ref{proposition: simple}) to prove Theorem \ref{theorem: order}.
\begin{lem}
\label{lemma: order}
Given integers $k\ge 2, d\ge 2$ and an abelian group $G=G_1\oplus\cdots\oplus G_k/\langle(u_1, \cdots, u_k)\rangle$ with $G_1, \cdots, G_k$ cyclic and for each $i=1, \cdots, k$, $u_i\in G_i$ is an element of order $d$. Then an integer $n$ is the order of an element in $G$ if and only if $n$ is a factor of $\lcm(|G_1|, \cdots, |G_k|)$.
\end{lem}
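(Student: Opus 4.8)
The plan is to show that the set of element orders of $G$ is exactly the set of divisors of $L:=\lcm(|G_1|,\dots,|G_k|)$. Since $G$ is a finite abelian group, its set of element orders coincides with the set of divisors of its exponent, so it suffices to prove that the exponent of $G$ equals $L$. Write $m_i:=|G_i|$ and identify $G_i\cong\ZZ/m_i\ZZ$; since each $u_i$ has order $d$ we have $d\mid m_i$ for every $i$, and $(u_1,\dots,u_k)$ has order $d$ in $\bigoplus_i G_i$. The easy half is that every element order divides $L$: if $x\in G$ lifts to $\tilde x=(y_1,\dots,y_k)\in\bigoplus_i G_i$, then $L\cdot\tilde x=0$ because $m_i\mid L$ for all $i$, so $L\cdot x=0$ and $\mathrm{ord}(x)\mid L$. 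In particular the exponent of $G$ divides $L$.

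For the reverse inequality I would construct an element of order exactly $L$; its powers $g^{L/n}$ then realize every divisor $n\mid L$ as an order. Because $G$ is the direct sum of its Sylow subgroups, it is enough to produce, for each prime $p\mid L$, an element of $G$ of $p$-order $p^{a}$, where $a:=\max_i v_p(m_i)=v_p(L)$ and $v_p$ denotes the $p$-adic valuation; combining these over all $p$ via the Chinese Remainder Theorem yields an element of order $L$. Fix such a $p$, set $f:=v_p(d)$, and pass to $p$-parts: the $p$-Sylow of $\bigoplus_i G_i$ is $\bigoplus_i\ZZ/p^{e_i}\ZZ$ with $e_i:=v_p(m_i)$, the image $\bar u$ of $(u_1,\dots,u_k)$ has each coordinate of order exactly $p^f$, and the $p$-Sylow of $G$ is the quotient by the cyclic group $\langle\bar u\rangle$ of order $p^f$. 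Choose an index $j$ with $e_j=a$ and let $v$ generate the $j$-th factor $\ZZ/p^a\ZZ$.

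The heart of the argument is to check that the image of $v$ still has order $p^a$ in this quotient. Concretely, $p^c v\in\langle\bar u\rangle$ means $s\bar u=p^c v$ for some integer $s$; since $k\ge 2$, choosing any coordinate $i'\ne j$ forces $s\bar u_{i'}=0$, and as $\bar u_{i'}$ has order $p^f$ this gives $p^f\mid s$, whence $s\bar u_j=0$ and therefore $p^c=0$ in $\ZZ/p^a\ZZ$, i.e. $c\ge a$. (When $f=0$ the group $\langle\bar u\rangle$ is trivial and the claim is immediate.) Thus the image of $v$ has order $p^a$, completing the local construction and hence the proof.

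The main obstacle is precisely this last computation. A priori, quotienting by $\langle u\rangle$ could lower the exponent below $L$ — and indeed it does when $k=1$, where $G$ is cyclic of order $m_1/d$, so the statement genuinely fails without $k\ge 2$. The key point, which is where the hypotheses $k\ge 2$ and $\mathrm{ord}(u_i)=d$ for all $i$ enter, is that the presence of a second coordinate whose $u$-entry has full order $p^f$ forces any coefficient $s$ witnessing $p^c v\in\langle\bar u\rangle$ to be divisible by $p^f$, which in turn annihilates the $j$-th coordinate and prevents the order from dropping.
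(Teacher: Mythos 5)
Your proof is correct, and it takes a genuinely different route from the paper's. The paper works globally in the given presentation: for each prime $p_i\mid d$ it picks a coordinate $j_i$ where the $p_i$-adic valuation of $|G_{j_i}|$ is \emph{minimal}, builds a single element $(g_1,\dots,g_k)$ of the direct sum by shrinking the order of $g_j$ by the factors $\prod_{j_i=j}p_i$, and then shows its class has order $L$ by contradiction: if $(g_1^a,\dots,g_k^a)=(u_1^b,\dots,u_k^b)$ were nontrivial, all coordinates $u_i^b$ would share a common order $d'\ne 1$, while the construction forces the $p$-valuations of $\ord(g_1^a)$ and $\ord(g_2^a)$ to differ for a prime $p\mid d'$. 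You instead localize: you split $G$ into its Sylow subgroups, identify the $p$-Sylow with $\bigl(\bigoplus_i \ZZ/p^{e_i}\ZZ\bigr)/\langle\bar u\rangle$ where every coordinate of $\bar u$ has order exactly $p^f$ with $f=v_p(d)$, and check that the generator $v$ of a \emph{maximal} factor survives with full order $p^a$, since any relation $p^c v=s\bar u$ is detected by a second coordinate ($k\ge 2$ forces $p^f\mid s$, hence $s\bar u=0$); the Chinese Remainder Theorem then reassembles an element of order $L$. Both arguments use the two hypotheses at the same structural point --- a second coordinate together with the fact that all $u_i$ have the \emph{same} order $d$ (in your version: all $p$-parts $\bar u_i$ have equal order $p^f$; in the paper's: all $u_i^b$ have equal order $d'$) --- but your prime-by-prime reduction turns the key verification into a one-line valuation computation and needs no cleverly adjusted global element, at the cost of invoking the primary decomposition of $G$ and of the quotient (your unproved but standard claim that $\langle u\rangle$ splits as the direct sum of its $p$-parts, each sitting inside the corresponding Sylow summand, so the quotient splits too). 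The paper's proof stays entirely inside the given presentation with one explicit element, at the cost of a more delicate order-comparison; your remark that the statement fails for $k=1$ correctly pinpoints why neither argument can avoid the second coordinate.
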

\begin{proof}
Denote $L=\lcm(|G_1|, \cdots, |G_k|)$. An integer $n$ is the order of an element in $G_1\oplus\cdots\oplus G_k$ if and only if $n$ is a factor of $L$. Thus the orders of any elements in $G$ are factors of $L$. To prove the lemma, it suffices to construct an element in $G$ with order equals to $L$. Let $p_1, \cdots, p_t$ be all different prime factors of $d$. For $i\in\{1, \cdots, t\}$, take $j_i\in \{1, \cdots, k\}$ such that the exponent of $p_i$ in $|G_{j_i}|$ is minimal among the exponents of $p_i$ in $|G_1|, \cdots, |G_k|$. For $j\in \{1, \cdots, k\}$, take an element $g_j\in G_j$ with order $|G_j|/\prod_{j_i=j} p_i$. Suppose the order of $[(g_1, \cdots, g_k)]\in G_1\oplus\cdots\oplus G_k/\langle(u_1, \cdots, u_k)\rangle$ is $a$. We next show that $a=L$. 

\smallskip

There exists a positive integer $b$ such that $(g_1^a, \cdots, g_k^a)=(u_1^b, \cdots, u_k^b)$. Suppose to the contrary that $(g_1^a, \cdots, g_k^a)$ is not the identity element. Since $u_1, \cdots, u_k$ have the same order $d$, the powers $u_1^b, \cdots, u_k^b$ also have the same order $d'$. Here $d'\ne 1$ and $d'\mid d$. Let $p$ be a prime factor of $d'$. Without loss of generality, we assume that $p=p_1$ and $j_1=1$. Denote by $\alpha_1, \alpha_2$ the exponents of $p$ in $|G_1|, |G_2|$ respectively (we can do this because $k\ge 2$). From $j_1=1$ we have $\alpha_1\le \alpha_2$. By constructions of $g_1, g_2$, we know that the exponent of $p$ in $\ord(g_1)$ is less than $\alpha_1$, and the exponent of $p$ in $\ord(g_2)$ is equal to $\alpha_2$. Therefore, the exponent of $p$ in $\ord(g_1)$ is less than that in $\ord(g_2)$. On the other hand, the orders of $g_1^a=u_1^b$ and $g_2^a=u_2^b$ all equal to $d'$ and $p\mid d'$, which is a contradiction. 

We conclude that $(g_1^a, \cdots, g_k^a)$ is the identity element in $G_1\oplus\cdots\oplus G_k$. Notice that the order of $(g_1, \cdots, g_k)$ in $G_1\oplus\cdots\oplus G_k$ is equal to $\lcm(\ord(g_1), \cdots, \ord(g_k))$, which is equal to $L=\lcm(|G_1|, \cdots, |G_k|)$ from the constructions of $g_1, \cdots, g_k$. Thus $L|a$, which implies that $L=a$. The lemma then follows.
\end{proof}

\section{Abelian Automorphism Groups}
\label{section: abelian}
In this section we formulate and prove our main theorems. For a homogeneous polynomial $F(x_1, \cdots, x_N)$ of degree $d$, we denote by $S(F)$ the set of monomials $\prod x_i^{\alpha_i}$ with nonzero coefficients in $F$.

The next lemma is equivalent to Lemma 3.2 in \cite{oguiso2019quintic}. The formulation here is easier to use in our setting.
\begin{lem}
\label{lemma: criterion}
Suppose $F(x_1, \cdots, x_N)$ is a smooth polynomial of degree $d$. Then for non-negative integers $a>b$, and distinct integers $i_1, \cdots, i_a, j_1, \cdots, j_b\in\{1, \cdots, N\}$, there exists a monomial $m\in S(F)$, such that the sum of the degrees of $x_{i_1}, \cdots, x_{i_a}$ in $m$ is at least $d-1$, and the variants $x_{j_1}, \cdots, x_{j_b}$ do not appear in $m$.
\end{lem}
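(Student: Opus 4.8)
The plan is to argue by contradiction using the smoothness of $F$, producing the desired monomial by ruling out the existence of a singular point. Write $I=\{i_1,\dots,i_a\}$ and $J=\{j_1,\dots,j_b\}$, which are disjoint subsets of $\{1,\dots,N\}$, and for a monomial $m$ let $\deg_I(m)$ and $\deg_J(m)$ denote its total degrees in the variables $\{x_i\}_{i\in I}$ and $\{x_j\}_{j\in J}$ respectively. Suppose, toward a contradiction, that no monomial $m\in S(F)$ satisfies both $\deg_I(m)\ge d-1$ and $\deg_J(m)=0$. I would then search for a nonzero common zero of all the partial derivatives $\partial F/\partial x_k$ among the points $p$ \emph{supported on} $I$, i.e. with $p_k=0$ for every $k\notin I$; any such point contradicts the smoothness hypothesis.

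The key step is to determine which monomials of $F$ can contribute to $\partial F/\partial x_k(p)$ at such a point $p$. For $k\in I$, a monomial survives differentiation by $x_k$ and evaluation at $p$ only if all of its variables lie in $I$, so it is a pure $I$-monomial of degree $d$; but this has $\deg_I=d\ge d-1$ and $\deg_J=0$, hence by assumption none occur and $\partial F/\partial x_k(p)=0$. For $k\notin I\cup J$, a surviving monomial must be of the shape $x_k\cdot n$ with $n$ a pure $I$-monomial of degree $d-1$, which has $\deg_I=d-1$ and $\deg_J=0$ and is again excluded, so $\partial F/\partial x_k(p)=0$. The only partials that need not vanish are those with $k\in J$: here the surviving monomials have the form $x_j\cdot n$ with $n$ pure of degree $d-1$ in the $I$-variables, and I would collect them into
\[
Q_j(p)=\frac{\partial F}{\partial x_j}(p),\qquad j\in J,
\]
each a homogeneous polynomial of degree $d-1$ in the $a$ variables $\{p_i\}_{i\in I}$.

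It then remains to find a nonzero point $p$ supported on $I$ at which all $Q_j$ vanish simultaneously, and this is precisely where the strict inequality $a>b$ enters: the forms $Q_{j_1},\dots,Q_{j_b}$ define $b$ hypersurfaces in the projective space $\PP^{a-1}$ of the $I$-variables, and since $a-1\ge b$, the projective dimension theorem guarantees their common zero locus is nonempty. Lifting any point of this locus back to $\PP^{N-1}$ by zeros yields a nonzero common solution of $\partial F/\partial x_1=\dots=\partial F/\partial x_N=0$, contradicting the smoothness of $F$. I expect the only delicate point to be the bookkeeping in the derivative computation, namely verifying exactly which monomials survive evaluation at a point supported on $I$; the decisive structural input is the elementary dimension count that converts $a>b$ into the nonemptiness of an intersection of $b$ hypersurfaces in $\PP^{a-1}$. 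The degenerate case $b=0$ reduces to choosing any nonzero point supported on $I$, recovering Lemma 1.2 of \cite{gonzalez2013order}.
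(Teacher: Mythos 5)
Your proof is correct and is essentially the paper's argument: both proceed by contradiction, observe that under the assumption every monomial contributing to a partial derivative at a point supported on $\{x_{i_1},\dots,x_{i_a}\}$ must have the form $F_j(x_{i_1},\dots,x_{i_a})\,x_{j}$ with $j\in\{j_1,\dots,j_b\}$, and then use $a>b$ to find a nonzero common zero of the $b$ degree-$(d-1)$ forms in $a$ variables, yielding a singular point of $V(F)$. The only difference is presentational — you verify directly which monomials survive each partial derivative, which merely spells out the ``straightforward calculation'' the paper leaves implicit.
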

\begin{proof}
Suppose there does not exist such a monomial. Let $H$ be the part of $F$ in which the monomials have total degree at least $d-1$ in $x_{i_1}, \cdots, x_{i_a}$. Then each monomial in $H$ has exactly total degree $d-1$ in $x_{i_1}, \cdots, x_{i_a}$, and total degree $1$ in $x_{j_1}, \cdots, x_{j_b}$. Thus we can write 
\begin{equation*}
H=F_1(x_{i_1}, \cdots, x_{i_a})x_{j_1}+\cdots+F_b(x_{i_1}, \cdots, x_{i_a})x_{j_b}.
\end{equation*}
Since $b<a$, there exists a nonzero solution $(x_{i_1}=t_1, \cdots, x_{i_a}=t_a)$ for equations $F_1=\cdots=F_b=0$. By straightforward calculation, the polynomial $F$ is singular at the point $[x_1:\cdots:x_N]$ with $x_{i_l}=t_l$ and other coordinates vanish. This contradicts the assumption that $F$ is smooth.
\end{proof}

Our first main theorem is:
\begin{thm}
\label{theorem: abelian}
Given integers $d\ge 3$ and $N\ge 3$. Let $G$ be a finite abelian group acting linearly and faithfully on a smooth $(N-2)$-fold in $\PP^{N-1}$ of degree $d$, such that $\gcd(|G|, d-1)=1$ and the action is liftable. Then we can choose a coordinate system $(x_1, \cdots, x_N)$ for $\PP^{N-1}$, such that there exists a $K$-pure polynomial $F(x_1, \cdots, x_N)$ of degree $d$ with $G<G_F$.
\end{thm}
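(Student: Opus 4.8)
The plan is to diagonalize the action, reduce the statement to a purely combinatorial claim about the characters, and then attack that claim by an extremal argument fed by Lemma \ref{lemma: criterion}.

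First I would diagonalize. Since the action is liftable, fix a lift $j\colon G\to\GL(N,\CC)$; since $G$ is abelian, $j(G)$ is simultaneously diagonalizable, so after choosing coordinates $G$ acts through characters $\lambda_1,\dots,\lambda_N\colon G\to\CC^\times$, with $g$ scaling $x_i$ by $\lambda_i(g)$. As $V(F_0)$ is irreducible, the defining polynomial $F_0$ is semi-invariant of some weight $\lambda_0\colon G\to\CC^\times$, and because $j(G)$ is diagonal every monomial in $S(F_0)$ has weight exactly $\lambda_0$, i.e. $\prod_i\lambda_i^{\alpha_i}=\lambda_0$. I would seek the target $K$-pure polynomial in the form $F=\sum_{i=1}^N x_i^{d-1}x_{\sigma(i)}$ for a permutation $\sigma\in S_N$: by Proposition \ref{proposition: smooth criterion} such an $F$ is automatically simple (a permutation never repeats a value, so condition (2) there holds vacuously), and its functional graph is a disjoint union of cycles, with each fixed point $\sigma(i)=i$ giving $x_i^d$; hence $F$ is $K$-pure. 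For diagonal $g\in G$ one has $g(x_i^{d-1}x_{\sigma(i)})=\lambda_i(g)^{d-1}\lambda_{\sigma(i)}(g)\,x_i^{d-1}x_{\sigma(i)}$, so $G<G_F$ holds precisely when there is a single character $\lambda$ with $\lambda_i^{d-1}\lambda_{\sigma(i)}=\lambda$ for all $i$ (faithfulness then upgrades the resulting map $G\to G_F$ to an inclusion). Taking $\lambda=\lambda_0$, this is the requirement $\lambda_{\sigma(i)}=T(\lambda_i)$ for all $i$, where $T(\mu):=\lambda_0\,\mu^{1-d}$. Here is the sole place the hypothesis enters: since $\gcd(|G|,d-1)=1$, the power map $\mu\mapsto\mu^{1-d}$ is an automorphism of the (finite abelian) character group, so $T$ is a bijection of the set of characters.

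Everything then reduces to the following claim, which I expect to be the main obstacle. Writing $m(\mu)=\#\{i:\lambda_i=\mu\}$ for the multiplicity of a character among $\lambda_1,\dots,\lambda_N$, I claim $m(T\mu)=m(\mu)$ for every $\mu$. Granting this, for each $\mu$ I pick a bijection of $\{i:\lambda_i=\mu\}$ onto $\{i:\lambda_i=T\mu\}$ (legal since the two sets have equal size) and assemble these into one map $\sigma$; as $T$ permutes the characters, the sets $\{i:\lambda_i=T\mu\}$ partition $\{1,\dots,N\}$ as $\mu$ ranges over the support of $m$, so $\sigma$ is a genuine permutation with $\lambda_{\sigma(i)}=T(\lambda_i)$, which completes the proof as explained above.

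To prove the claim I would argue around each $T$-orbit; it suffices to establish $m(T\mu)\ge m(\mu)$ for all $\mu$, since $T$ is a bijection of a finite set and iterating this inequality once around a cycle $\mu\to T\mu\to\cdots\to\mu$ forces all multiplicities on the orbit to agree. For the inequality, assume $\mu\ne T\mu$ (the case $\mu=T\mu$ is trivial) and suppose toward a contradiction that $m(T\mu)<m(\mu)$. Put $C=\{i:\lambda_i=\mu\}$ and $C'=\{i:\lambda_i=T\mu\}$; these are disjoint and $|C|>|C'|$, so Lemma \ref{lemma: criterion} (with index set $C$ and forbidden set $C'$) yields a monomial $M\in S(F_0)$ of total degree $\ge d-1$ in the variables $\{x_i:i\in C\}$ and containing no variable of $C'$. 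Write $M=\big(\prod_{i\in C}x_i^{e_i}\big)R$ with $e:=\sum_{i\in C}e_i\ge d-1$ and $\deg R=d-e\le 1$, so that the weight identity reads $\mu^{e}\cdot\mathrm{weight}(R)=\lambda_0$. If $e=d$ then $\mu^d=\lambda_0$, whence $T\mu=\lambda_0\mu^{1-d}=\mu$, contradicting $\mu\ne T\mu$; if $e=d-1$ then $R=x_l$ for a single variable $x_l$ with $\mu^{d-1}\lambda_l=\lambda_0$, i.e. $\lambda_l=\lambda_0\mu^{1-d}=T\mu$, forcing $l\in C'$ and contradicting the absence of $C'$-variables in $M$. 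This proves $m(T\mu)\ge m(\mu)$ and hence the claim, and with it the theorem.
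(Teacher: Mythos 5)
Your proposal is correct and takes essentially the same route as the paper: diagonalize via the lift, reduce to finding $\sigma$ with $\lambda_{\sigma(i)}=\lambda\,\lambda_i^{1-d}$, and use Lemma \ref{lemma: criterion} together with $\gcd(|G|,d-1)=1$ (which makes the $(1-d)$-power map a bijection on characters) to force the character block $\{i:\lambda_i=\mu\}$ to inject into $\{i:\lambda_i=\lambda\mu^{1-d}\}$, with the two monomial cases $e=d$ and $e=d-1$ matching the paper's dichotomy $\lambda=\lambda_1^d$ versus $\lambda\ne\lambda_1^d$. Your explicit map $T$ and the orbit argument upgrading $m(T\mu)\ge m(\mu)$ to equality is just a cleaner packaging of the paper's greedy block-by-block construction (its ``continue the above argument for the rest coordinates''), not a genuinely different method.
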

\begin{proof}
By the assumption, we can choose a homogeneous coordinate system $(x_1, \cdots, x_N)$ for $\PP^{N-1}$, such that $G$ admits a linear, faithful, liftable and diagonal action on a smooth hypersurface $V(F)\subset \PP^{N-1}_{x_1, \cdots, x_N}$ of degree $d$. We can take a lift $G\subset\GL(N, \CC)$ and a character $\lambda\colon G\to \CC^{\times}$, such that $g(F)=\lambda(g)F$ for any $g\in G$. For each $a=1, \cdots, N$, let $\lambda_a$ be the character of $G$ such that $g(x_a)=\lambda_a(g) x_a$ for any $g\in G$. Since $F(x_1, \cdots, x_N)$ is smooth, it contains for each $i\in\{1,\cdots, N\}$ at least one monomial with nonzero coefficient in which the multiplicity of $x_i$ is at least $d-1$. Next we show the existence of $\overline{i}=(i_1, \cdots, i_N)\in \{1, \cdots, N\}^N$ such that $\lambda_a^{d-1}\lambda_{i_a}=\lambda$ for any $a\in\{1, \cdots, N\}$, and $F_{d,\overline{i}}$ is $K$-pure.
\smallskip

Since $|G|$ is coprime to $d-1$, two characters $\lambda_a$ and $\lambda_b$ are different if and only if $\lambda_a^{d-1}$ and $\lambda_b^{d-1}$ are different. Suppose first that $\lambda_1$ is different from the other $N-1$ characters $\lambda_2, \cdots, \lambda_N$, then we simply choose $i_1\in\{1,\cdots,N\}$ with $x_1^{d-1}x_{i_1}\in S(F)$ in order to obtain the result. Secondly, we consider the case where there exist other characters equal to $\lambda_1$. Without loss of generality, we label them by $\lambda_2, \cdots, \lambda_a$. If $\lambda=\lambda_1^d$, then $x_1^d, \cdots, x_a^d$ are $(G, \lambda)$-invariant. We can take $i_1=1,\cdots, i_a=a$. Suppose we have $\lambda\ne\lambda_1^d$. Denote $\lambda'=\lambda/\lambda_1^{d-1}$. Let $x_{a+1}, \cdots, x_{a+b}$ be coordinates with associated characters equal to $\lambda'$.

Suppose $b<a$. By Lemma \ref{lemma: criterion}, there exists a monomial $m$ such that the total degree of $x_1, \cdots, x_a$ in $m$ is at least $d-1$, and the variables $x_{a+1}, \cdots, x_{a+b}$ do not appear in $m$. This implies the existence of an index $c\in \{a+b+1, \cdots, N\}$ such that $\lambda_c=\lambda'$, which is a contradiction. Thus we have $b\ge a$. This allows us to choose $i_1=a+1, i_2=a+2, \cdots, i_a=2a$.
\smallskip

We can continue the above argument for the rest coordinates, and obtain $\overline{i}\in \{1, \cdots, N\}^N$. From the construction, we have that for different $a, b\in\{1, \cdots, N\}$, the numbers $i_a$ and $i_b$ are different. Therefore, the polynomial $F=F_{d,\overline{i}}$ is $K$-pure with $G<G_F$.
\end{proof}

We hope to weaken the condition $\gcd(|G|, d-1)=1$ in Theorem \ref{theorem: abelian}. In other words, we want to control the structures of finite abelian automorphism groups $G$ of smooth hypersurfaces when $|G|$ may have factors not coprime to $d-1$. An attempt in this direction leads to the next theorem.  

\begin{thm}
\label{theorem: main}
Given integers $d\ge 3$ and $N\ge 3$. Let $G$ be a finite abelian group with a linear, faithful and liftable action on a smooth $(N-2)$-fold of degree $d$. Assume that there exists a decomposition $G=G_1\oplus G_2$ such that $G_1$ is cyclic and $G_2$ has order coprime to $d-1$. Then there exists a simple polynomial $F_0(x_1, \cdots, x_{N_0})$ (for $N_0\le N$) of degree $d$, such that $F_0$ has at most one component of type $T_{\ge 2}$, and $G$ is isomorphic to a subgroup of the group $G_{F_0}$ of diagonal automorphisms of $V(F_0)$. If $\gcd(|G|, d-1)\ne 1$, then $F_0$ can be chosen with exactly one component of type $T_{\ge 2}$. 
\end{thm}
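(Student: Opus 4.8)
The plan is to recast the statement as a combinatorial problem about a single \emph{target function} on the coordinates, in the spirit of the proof of Theorem \ref{theorem: abelian}, and then to show that the failure of $K$-purity collapses onto one directed chain precisely because $G_1$ is cyclic.

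First I would diagonalize. Using the liftability hypothesis, fix a lift $G\subset\GL(N,\CC)$ and homogeneous coordinates $(x_1,\dots,x_N)$ in which $g(x_a)=\lambda_a(g)\,x_a$ for characters $\lambda_a\colon G\to\CC^\times$, together with the character $\lambda$ satisfying $g(F)=\lambda(g)F$. Smoothness and Lemma \ref{lemma: criterion} guarantee that for each $a$ some coordinate carries the character $\lambda\lambda_a^{1-d}$. The task then reduces to choosing $\overline{i}=(i_1,\dots,i_N)$ with $\lambda_a^{d-1}\lambda_{i_a}=\lambda$ for all $a$ and such that $F_{(d,\overline{i})}$ satisfies condition $(2)$ of Proposition \ref{proposition: smooth criterion}, i.e.\ is simple. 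The point of this reduction is that any such $\overline{i}$ gives $G<G_{F_{(d,\overline{i})}}$ for free, since every monomial $x_a^{d-1}x_{i_a}$ then has character $\lambda$; moreover the construction lives inside the original $N$ coordinates, so $N_0\le N$ is automatic and no separate embedding or gluing argument is needed. Thus everything is about the combinatorics of $\overline{i}$.

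Next I would run the matching argument of Theorem \ref{theorem: abelian}, keeping track of the arithmetic modulo $d-1$. The relevant structural fact is that the cyclic orders $|1-(1-d)^{a_i}|$ attached to $K$-blocks are all coprime to $d-1$, whereas the obstruction to matching is carried by the $(d-1)$-torsion subgroup $\widehat{G}[d-1]=\{\chi:\chi^{d-1}=1\}$ of the character group. Since $\widehat{G_2}$ has order coprime to $d-1$ it has no such torsion, so $\widehat{G}[d-1]$ sits inside $\widehat{G_1}\cong G_1$ and is therefore \emph{cyclic}, of order $\gcd(|G_1|,d-1)=\gcd(|G|,d-1)$. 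Equivalently, the endomorphism $\chi\mapsto\chi^{d-1}$ of $\widehat{G}$ has kernel $\widehat{G}[d-1]$, so its failure of injectivity is governed by a single cyclic group. On the part where this map is injective I would pair coordinates into directed cycles exactly as in Theorem \ref{theorem: abelian}, producing $K$-blocks (with isolated Fermat self-loops included).

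The main obstacle is the non-injective part. Coordinates whose characters differ only by an element of $\widehat{G}[d-1]$ are forced to map toward a common self-loop character (a $\lambda_c$ with $\lambda_c^d=\lambda$, whose presence among the coordinates is again furnished by Lemma \ref{lemma: criterion}), and a priori they could generate several parallel tails, i.e.\ several components of type $T_{\ge 2}$. The crux of the proof is to thread all of these ``excess'' coordinates into a \emph{single} path $\cdots\to\bullet\to c\to c$ ending at one self-loop. This is exactly where cyclicity of $\widehat{G}[d-1]$ --- a consequence of $G_1$ being cyclic, not merely of $G$ being abelian --- is indispensable: a cyclic torsion group linearly orders the successive translates, so the excess coordinates can be strung along one chain rather than into a branching configuration. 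Carrying this out requires a counting input generalizing the inequality $b\ge a$ used in Theorem \ref{theorem: abelian}, to certify that the needed chain and the cyclic skeleton can be realized simultaneously within the given coordinates; I expect this bookkeeping, and the verification that no second tail is ever forced, to be the hardest part. When $\gcd(|G|,d-1)=1$ the torsion is trivial, there is no excess, and $F_0$ is $K$-pure with no $T_{\ge 2}$-component; when $\gcd(|G|,d-1)\ne 1$ the self-loop necessarily acquires a nontrivial tail, giving exactly one $T_{\ge 2}$-component, say of type $T_k$, whose length $k$ is forced to be large enough that the $(d-1)$-primary part of $G$ embeds into the cyclic group $\ZZ/(d-1)^{k-1}$. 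Finally I would invoke Proposition \ref{proposition: simple} to read off $G_{F_0}$ and confirm that it has the asserted shape.
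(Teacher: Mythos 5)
Your proposal correctly sets up the character-theoretic frame (diagonalize via liftability, pass to the set $S$ of monomials of character $\lambda$, note that smoothness forces a coordinate carrying $\lambda\lambda_a^{1-d}$ for each $a$), and it correctly locates the difficulty in the $(d-1)$-torsion of the character group and the cyclicity of $G_1$. But there is a genuine gap, and it is precisely at the step you flag as ``the hardest part'' and then defer: you never actually show that the excess coordinates can be threaded into a single tail, and your structural premise --- that the construction can always be realized by a target map $\overline{i}$ defined on \emph{all} $N$ original coordinates, so that ``no separate embedding or gluing argument is needed'' --- is not how the situation plays out, and the theorem's allowance $N_0\le N$ is there for a reason. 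The paper's proof builds a minimal chain $x_{i_1}^{d-1}x_{i_2},\dots$ in $S$ starting from a coordinate with $x_{i_1}^d\notin S$ and shows it closes up as $x_{i_k}^{d-1}x_{i_{l'+1}}\in S$ with $l'=l$, where $l$ is minimal with $\ord(g_0)\mid (d-1)^l$. When $k>l+1$ the in-coordinate configuration is a tail of length $l$ feeding into a genuine cycle of length $k-l\ge 2$ (a $\rho$-shape), which violates condition $(2)$ of Proposition \ref{proposition: smooth criterion}: two arrows enter $i_{l+1}$ and neither is a self-loop. Nothing in the cyclicity of $\widehat{G}[d-1]$ lets you reroute this within the given coordinates; your heuristic that a cyclic torsion group ``linearly orders the successive translates'' also does not rule out two tails, since for instance two coordinates sharing a self-loop character can each terminate a chain, and merging the chains creates exactly such a forbidden double-arrow.

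What the paper does instead, and what your outline is missing, is a pivot to an abstract embedding in the hard case: it uses Lemma \ref{lemma: criterion} once more (applied to $x_l,x_k$ with $x_{l+1}$ excluded) to produce a monomial $x_l^u x_k^{d-u-1}x_t$ whose third index must satisfy $t>N_1$, thereby proving $N_1<N$; this frees one variable, and one then takes $F_0=F_T(y_1,\dots,y_{l+1})+F_K(x_{l+1},\dots,x_{N_1})$ in $N_0=N_1+1\le N$ variables with \emph{new} tail variables $y_i$. The group $G$ is no longer realized as the restriction of the original action: instead $G_1\hookrightarrow G_{F_T}\cong \ZZ/(d-1)^l\cdot(\cdot)$ because $\ord(g_0)\mid (d-1)^l$, and $G_2\hookrightarrow G_{F_K}$ because the paper separately proves (via the maximality of $N_1$ and a chain-chasing argument) that $G_2$ acts faithfully on the Klein block; Proposition \ref{proposition: simple} then glues these into $G\hookrightarrow G_{F_0}$. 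So the embedding/gluing argument you declare unnecessary is in fact the engine of the proof, and without it --- or without a proof of your unverified claim that a single tail can always be arranged on the full coordinate set --- the argument does not close. To repair your proposal you would either have to supply that combinatorial claim (which the $\rho$-shape analysis suggests is false as stated) or adopt the paper's two-case structure: realize $F_0$ on actual coordinates only when $k=l+1$, and otherwise trade a spare coordinate for an abstract $T_{l+1}$-block.
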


We need the following lemma for the proof of Theorem \ref{theorem: main}.
\begin{lem}
\label{lemma: lifting}
Given integers $d\ge 3$ and $N\ge 3$. Let $G$ be a finite abelian group acting faithfully and linearly on an $(N-2)$-fold $V(F)$ of degree $d$. Suppose the action is liftable. Then there exists a character $\lambda$ of $G$ such that $G$ is $(F, \lambda)$-liftable, and for any $g\in G$ with order coprime to $d$, we have $\lambda(g)=1$.
\end{lem}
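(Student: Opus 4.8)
The plan is to exploit the freedom one has in choosing a lift. Since the action is liftable, fix any lift $j\colon G\to\GL(N,\CC)$; as the class $[j(g)]$ preserves $V(F)$, each matrix $j(g)$ scales $F$, say $j(g)(F)=\lambda_0(g)F$. A short check using the relation $(gh)(F)=g(h(F))$ shows that $\lambda_0\colon G\to\CC^{\times}$ is a character, so $G$ is already $(F,\lambda_0)$-liftable. The remaining task is to modify the lift so that the resulting character vanishes on the elements of order coprime to $d$.

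Next I would record how $\lambda_0$ changes under the only available modification, namely replacing $j$ by $j'(g)=\chi(g)j(g)$ for a character $\chi\colon G\to\CC^{\times}$ (this is again a lift, since it projects to the same element of $\PGL(N,\CC)$). Because $F$ is homogeneous of degree $d$, scaling a matrix $A$ by a constant $c$ scales its action on $F$ by $c^{-d}$; explicitly $(cA)(F)=c^{-d}A(F)$. Hence the new character is $\lambda_0\cdot\chi^{-d}$, and the set of characters realizable as $\lambda$ is exactly the coset $\lambda_0\cdot\widehat{G}^{\,d}$, where $\widehat{G}^{\,d}$ denotes the subgroup of $d$-th powers in the character group $\widehat{G}$.

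It then remains to arrange triviality on the coprime-to-$d$ part. I would decompose $G=H\oplus G'$, where $H=\bigoplus_{p\nmid d}G_p$ collects the $p$-primary components for primes $p$ not dividing $d$ and $G'=\bigoplus_{p\mid d}G_p$; then $H$ is precisely the set of elements whose order is coprime to $d$, and $|H|$ is coprime to $d$. On $\widehat{H}\cong H$ the $d$-th power map is injective, since its kernel has order dividing $\gcd(d,|H|)=1$, hence it is an automorphism. Therefore I can pick $\chi_0\in\widehat{H}$ with $\chi_0^{\,d}=\lambda_0|_H$, extend it to $\chi\in\widehat{G}$ by setting it trivial on $G'$, and take $\lambda=\lambda_0\cdot\chi^{-d}$ together with the lift $j'(g)=\chi(g)j(g)$. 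Then $\lambda|_H=\lambda_0|_H\cdot\chi_0^{-d}=1$, which is exactly the desired vanishing, while $G$ is $(F,\lambda)$-liftable by construction.

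The argument is essentially formal, so I do not expect a serious obstacle; the only point requiring care is the bookkeeping of the degree-$d$ twist (the factor $c^{-d}$ coming from homogeneity), and the genuinely load-bearing input is the coprimality $\gcd(|H|,d)=1$, without which the $d$-th power map on $\widehat{H}$ need not be surjective and one could not cancel $\lambda_0|_H$.
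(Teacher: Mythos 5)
Your proof is correct, and it rests on the same arithmetic core as the paper's --- solving $b^d=a$ among roots of unity of order coprime to $d$ --- but it packages that core differently. The paper fixes a decomposition of $G$ into cyclic factors of prime-power orders $p_i^{\alpha_i}$, takes preimages $\widetilde{g_i}$ of the generators having exact order $p_i^{\alpha_i}$, and for each $i$ with $p_i\nmid d$ rescales $\widetilde{g_i}$ by a $p_i^{\alpha_i}$-th root of unity $b$ chosen so that the scalar $a$ with $\widetilde{g_i}F=aF$ is cancelled ($a=b^d$); the resulting character $\lambda$ is then trivial on every generator of order coprime to $d$, hence on the whole coprime-to-$d$ part. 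You instead fix one lift $j$, observe that every other lift is a character twist $j'=\chi\cdot j$ and that this changes the multiplier by $\chi^{-d}$ (so the achievable multipliers are exactly the coset $\lambda_0\widehat{G}^{\,d}$), and then kill $\lambda_0$ on $H=\bigoplus_{p\nmid d}G_p$ by inverting the $d$-th power map on $\widehat{H}$, which is an automorphism precisely because $\gcd(|H|,d)=1$. Your route buys two things: the homomorphism property of the modified lift is automatic, since a character twist of a homomorphism is again a homomorphism, whereas the paper's generator-by-generator rescaling tacitly requires the chosen preimages $\widetilde{g_i}$ to commute (true if they are taken inside a fixed lifting, but left implicit there); and you get the cleaner structural byproduct that the set of realizable characters $\lambda$ is exactly $\lambda_0\widehat{G}^{\,d}$. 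The paper's route buys explicitness: it produces the lifted generators in concrete normal form, matching the way liftings are manipulated later in the proofs of Theorem \ref{theorem: main} and Proposition \ref{proposition: lift}. Both arguments use neither faithfulness nor smoothness in any essential way, and both correctly reduce to the single load-bearing coprimality input you identified.
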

\begin{proof}
Take a decomposition $G=G_1\oplus\cdots\oplus G_k$ such that each factor $G_i$ is cyclic with order a prime power $p_i^{\alpha_i}$. For each $i=1, \cdots, k$, take $g_i$ be a generator of $G_i$, and take a preimage $\widetilde{g_i}$ of $g_i$ in $\GL(N, \CC)$, such that the order of $\widetilde{g_i}$ is equal to $p_i^{\alpha_i}$. For each $i\in\{1, \cdots, k\}$ with $p_i\nmid d$, we modify $\widetilde{g_i}$ as follows. Let $a$ be the complex number such that $\widetilde{g_i}F=aF$. Since $\widetilde{g_i}$ has order $p_i^{\alpha_i}$, the number $a$ is a $p_i^{\alpha_i}$-root of unity. Since $(p_i^{\alpha_i}, d)=1$, we can take another $p_i^{\alpha_i}$-root of unity, denoted by $b$, such that $a=b^d$. Replace $\widetilde{g_i}$ by $b \widetilde{g_i}$, we have that $\widetilde{g_i}(F)=F$.
\smallskip

Now the subgroup $\widetilde{G}$ of $\GL(N, \CC)$ generated by $\widetilde{g_1}, \cdots, \widetilde{g_k}$ is a lifting of $G$. Take the character $\lambda$ of $G$ with $\widetilde{g_i} F=\lambda(g_i)F$. Then $(F, \lambda)$ satisfies the requirement in the lemma.
\end{proof}

We are now ready to prove Theorem \ref{theorem: main}.

\begin{proof}[Proof of Theorem \ref{theorem: main}]
If $\gcd(|G|, d-1)=1$, the theorem follows from Theorem \ref{theorem: abelian}. We next assume $\gcd(|G|, d-1)\ne 1$. We can take a decomposition $G=G_1\oplus G_2$, where $G_1=\langle g_0\rangle$ is cyclic with all prime factors of $\ord(g_0)$ also factors of $d-1$, and $G_2$ has order coprime to $d-1$. 

\smallskip

Suppose $G$ admits a linear, faithful, liftable and diagonal action on $V(F)$ for $F=F(x_1, \cdots, x_N)$ smooth of degree $d$. Since the action is liftable, we can take a lifting $\widetilde{G}\subset \GL(N, \CC)$. For element $g\in G$ we denote by $\widetilde{g}$ the preimage of $g$ in $\widetilde{G}$. Let $\lambda$ be the character of $G$ such that $\widetilde{g}(F)=\lambda(g)F$ for each $g\in G$. By Lemma \ref{lemma: lifting} we can make choice of $\widetilde{G}$ such that $\lambda(g)=1$ when $\gcd(\ord(g), d)=1$. Since $\ord(g_0)$ is coprime to $d$, we have $\lambda(g_0)=1$.  For each $k\in\{1, \cdots, N\}$, let $\lambda_k$ be the character of $G$ such that $\widetilde{g} (x_k)=\lambda_k(g) x_k$ for any $g\in G$.

\smallskip

Let $S$ be the set of monomials $\prod_{i=1}^{N} x_i^{\alpha_i}$ such that $\prod_{i=1}^N \lambda_i^{\alpha_i}=\lambda$. Then $S(F)\subset S$. From the smoothness of $F$, we have that for any $i\in\{1, \cdots, N\}$, there exists $j$, such that $x_i^{d-1}x_j\in S(F)\subset S$.

\smallskip

Let $l$ be the minimal integer such that $\ord(g_0)\mid (d-1)^l$. Since $\ord(g_0)\ne 1$, $l$ is at least $1$. There exists a prime factor $p$ of $d-1$, such that the exponent of $p$ in $\ord(g_0)$ is greater than that in $(d-1)^{l-1}$. Denote by $\alpha$ the exponent of $p$ in $\ord(g_0)$. Let $g_1=g_0^{\frac{\ord(g)}{p^{\alpha}}}$. Then $g_1\in G_1$ has order $p^{\alpha}$.
\smallskip 

The values $\lambda_1(g_1), \cdots, \lambda_N(g_1)$ are $p^{\alpha}$-roots of unity and (by the faithfulness of the action of $G$) at least one of them is primitive. Take $i_1\in \{1,\cdots, N\}$ such that $\lambda_{i_1}(g_1)=\zeta$ is a primitive $p^{\alpha}$-root of unity.  We have $\widetilde{g_1}(x_{i_1}^d)=\zeta^d x_{i_1}^d$ and $\zeta^d\ne 1$. On the other hand, we have $\lambda(g_1)=1$. Thus $x_{i_1}^d\notin S$. 

\smallskip

We can find distinct elements $i_1, i_2, \cdots, i_k\in \{1,\cdots, N\}$, such that $x_{i_t}^{d-1} x_{i_{t+1}}\in S$ for $t=1,\cdots, k-1$, and there exists $l'\in \{0,\cdots, k-1\}$ such that $x_{i_k}^{d-1} x_{i_{l'+1}}\in S$. We ask $k$ to be minimal among all possible choices. Then $\lambda_{i_1}, \cdots, \lambda_{i_k}$ are distinct. Without loss of generality, we assume $i_t=t$ for $t=1,\cdots, k$. 
\smallskip

For $t\in \{1,2,\cdots, k-1\}$, since $x_t^{d-1} x_{t+1}\in S$, we have $\lambda_t(g_1)^{d-1}\lambda_{t+1}(g_1)=\lambda(g_1)=1$. This implies that $\lambda_{t+1}(g_1)=\lambda_t(g_1)^{1-d}$. Then we have $\lambda_i(g_1)=\zeta^{(1-d)^{i-1}}$ for $i=1, 2, \cdots, k$. 
\smallskip

Since $x_k^{d-1}x_{l'+1}\in S$, we have $\lambda_{l'+1}(g_1)=\lambda_k(g_1)^{1-d}=\lambda_{l'+1}(g_1)^{(1-d)^{k-l'}}$. Notice that $\lambda_{l'+1}(g_1)$ is a $p^{\alpha}$-root of unity. Thus $\lambda_{l'+1}(g_1)=1$. This implies that $l'\ge l$ and $\lambda_t(g_1)=1$ for $t=l'+1, \cdots, k$. 
\smallskip

Next we show that $l'=l$. Assume to the contrary that $l'>l$. For any integer $t\in \{l+1,\cdots, k\}$, we have $\lambda_{t}(g_0)=\lambda_1(g_0)^{(1-d)^{t-1}}$. Since $\ord(g_0)\mid (d-1)^l$ and $t\ge l+1$, we conclude that $\lambda_t(g_0)=1$. In particular, $\lambda_{l'}(g_0)=\lambda_{k}(g_0)=1$. From $\lambda_{l'}^{d-1}\lambda_{l'+1}=\lambda_{k}^{d-1}\lambda_{l'+1}=\lambda$ we obtain $\lambda_{l'}^{d-1}=\lambda_k^{d-1}$. Since $\gcd(|G_2|, d-1)=1$, the restrictions of $\lambda_{l'}$ and $\lambda_k$ to $G_2$ are the same. We conclude that $\lambda_{l'}=\lambda_k$. This contradicts the minimality of $k$. Therefore, we have $l'=l$.

\smallskip

Consider the restrictions of $\lambda_1, \cdots, \lambda_k$ to $G_2$. From $\lambda_l^{d-1} \lambda_{l+1}=\lambda_k^{d-1}\lambda_{l+1}$ we obtain $\lambda_l\big{|}_{G_2}=\lambda_k\big{|}_{G_2}$. Then by $\lambda_{l-1}^{d-1}\lambda_l=\lambda_{k-1}^{d-1}\lambda_k$ we obtain $\lambda_{l-1}\big{|}_{G_2}=\lambda_{k-1}\big{|}_{G_2}$. Continuing this process, we obtain $\lambda_t\big{|}_{G_2}=\lambda_{t+k-l}\big{|}_{G_2}$ for $t\in\{1,\cdots, l\}$. In particular, each $\lambda_t\big{|}_{G_2}$ ($t=1,\cdots, k$) is equal to one of $\lambda_{l+1}\big{|}_{G_2},\cdots, \lambda_k\big{|}_{G_2}$.
\smallskip

After applying permutations to the coordinates $x_{k+1}, \cdots, x_{N}$, we can take integers $k=k_0<k_1<\cdots<k_s=N_1\le N$, such that for each $i=1, \cdots, s$, the Klein polynomial
\begin{equation*}
F_i(x_{k_{i-1}+1}, \cdots, x_{k_i})=x_{k_{i-1}+1}^{d-1}x_{k_{i-1}+2}+\cdots+x_{k_i-1}^{d-1}x_{k_i}+x_{k_i}^{d-1}x_{k_{i-1}+1}
\end{equation*}
satisfies $S(F_i)\subset S(F)$. Moreover, for any $t\in\{k+1, \cdots, N_1\}$, the character $\lambda_t\big{|}_{G_2}$ is different from $\lambda_{l+1}\big{|}_{G_2}, \cdots, \lambda_k\big{|}_{G_2}$. We ask $N_1$ to be maximal among all possible choices. We next prove that the action of $G_2$ on $\PP_{x_{l+1}, \cdots, x_{N_1}}^{N_1-l-1}$ is faithful. Suppose not, then there exists $i_1\in\{N_1+1, \cdots, N\}$ such that $\lambda_{i_1}\big{|}_{G_2}$ is different from $\lambda_{l+1}\big{|}_{G_2}, \cdots, \lambda_{N_1}\big{|}_{G_2}$. 
\smallskip

Since $F$ is smooth, there exists $i_2\in \{1, \cdots, N\}$ such that $x_{i_1}^{d-1} x_{i_2}\in S(F)$. Suppose there exists $j\in\{l+1, \cdots, N_1\}$ such that $\lambda_{i_2}\big{|}_{G_2}=\lambda_j\big{|}_{G_2}$. Then $(\lambda_{i_1}^{d-1}\lambda_{i_2})\big{|}_{G_2}=\lambda\big{|}_{G_2}=(\lambda_{j-1}^{d-1}\lambda_j)\big{|}_{G_2}$. This implies that $\lambda_{i_1}\big{|}_{G_2}=\lambda_{j-1}\big{|}_{G_2}$. This contradicts our assumption on $i_1$. Therefore, we have $i_2\in \{N_1+1, \cdots, N\}$ and $\lambda_{i_2}\big{|}_{G_2}$ is different from $\lambda_{l+1}\big{|}_{G_2}, \cdots, \lambda_{N_1}\big{|}_{G_2}$. 
\smallskip

Continuing this process, we have a sequence $i_1, i_2, \cdots$, with each item an element in $\{N_1+1, \cdots, N\}$. Take the minimal integer $p$ such that $i_{p+1}=i_q$ for certain $q\in\{1, \cdots, p\}$. Then the Klein polynomial $F_{s+1}=x_{i_q}^{d-1}x_{i_{q+1}}+\cdots+x_{i_{p-1}}^{d-1}x_{i_p}+x_{i_p}^{d-1}x_{i_q}$ satisfies that $S(F_{s+1})\subset S(F)$. This contradicts the maximality of $N_1$. Thus we conclude that the action of $G_2$ on $\PP_{x_{l+1}, \cdots, x_{N_1}}^{N_1-l-1}$ is faithful.
\smallskip

Next we construct $F_0$ as required in the theorem. If $k=l+1$, we take $N_0=N_1$ and 
\begin{equation*}
F_0(x_1, \cdots, x_{N_1})=x_1^{d-1}x_2+\cdots+x_{k-1}^{d-1}x_k+x_k^d+F_1+\cdots+F_s
\end{equation*}
which is a simple polynomial with exactly one component of type $T_{\ge 2}$. The action of $G$ on $\PP^{N-1}_{x_1, \cdots, x_N}$ then induces a faithful action of $G$ on $V(F_0)\subset \PP^{N_1-1}_{x_1,\cdots, x_{N_1}}$.

\smallskip

Now we assume that $k>l+1$. We aim to show that $N_1<N$. By Lemma \ref{lemma: criterion}, there exists a monomial $m\in S$ with the total degree of $x_l, x_k$ at least $d-1$, and the degree of $x_{l+1}$ being zero. We write $m=x_l^{u} x_k^{d-u-1} x_t$ with $t\ne l+1$. From $\lambda_l\big{|}_{G_2}=\lambda_k\big{|}_{G_2}$ and $\lambda_k^{d-1}\lambda_{l+1}=\lambda_l^u \lambda_k^{d-u-1}\lambda_t=\lambda$ we obtain $\lambda_t\big{|}_{G_2}=\lambda_{l+1}\big{|}_{G_2}$. Since $\lambda_{l+1}, \cdots, \lambda_k$ are distinct and they all take value $1$ at $g_0$, we know that $\lambda_{l+1}\big{|}_{G_2}, \cdots, \lambda_k\big{|}_{G_2}$ are distinct. Therefore, $t\notin \{l+1, \cdots, k\}$. From $k>l+1$ we have $\lambda_l\big{|}_{G_2}\ne \lambda_{l+1}\big{|}_{G_2}$, hence $t\ne l$. From the construction of $F_1, \cdots, F_s$, we know that $\lambda_t\big{|}_{G_2}=\lambda_{l+1}\big{|}_{G_2}$ is different from $\lambda_{k+1}\big{|}_{G_2}, \cdots, \lambda_{N_1}\big{|}_{G_2}$. Thus $t\notin\{k+1, \cdots, N_1\}$. Take values at $g_0\in G_1$ on both sides of $\lambda_l^u \lambda_k^{d-u-1}\lambda_t=\lambda$, we have that $\lambda_t(g_0)=\lambda_l(g_0)^{-u}$. This implies that $t\notin\{1,2,\cdots, l-1\}$. We conclude that $t>N_1$. In particular, we must have $N_1<N$.
\smallskip

Now let $N_0=N_1+1$. We take a simple polynomial 
\begin{equation*}
F_0(y_1, \cdots, y_{l+1}, x_{l+1}, \cdots, x_{N_1})=F_T(y_1, \cdots, y_{l+1})+F_K(x_{l+1}, \cdots, x_{N_1})
\end{equation*} 
with $F_T=y_1^{d-1}y_2+\cdots+y_l^{d-1}y_{l+1}+y_{l+1}^d$ and $F_K=F_1+\cdots+F_s$. The polynomial $F_0$ has $N_0=N_1+1\le N$ variables. The group $G_1$ is isomorphic to a subgroup of $G_{F_T}$, and $G_2$ is isomorphic to a subgroup of $G_{F_K}$. Combining with Proposition \ref{proposition: simple} we conclude that $G$ is isomorphic to a subgroup of $G_{F_0}$. 
\end{proof}

\begin{rmk}
\label{remark: cubic fourfold}
For our application to cubic fourfolds, we give a deeper analysis about the case $d=3$. In this case, we claim that if $k>l+1$, then $N_1\le N-2$ (thus $N_0=N_1+1$ can be chosen to be strictly smaller than $N$).

\smallskip

Assume to the contrary that $N_1=N-1$. Suppose $k=l+2$. Then $\lambda_{l+1}^2 \lambda_k=\lambda_k^2\lambda_{l+1}=\lambda$, which implies that $\lambda_k=\lambda_{l+1}$. This contradicts the minimality of $k$. Thus we have that $k\ge l+3$. We already find $t\in\{N_1+1, \cdots, N\}$ with $\lambda_t\big{|}_{G_2}=\lambda_{l+1}\big{|}_{G_2}$. The only possibility is that $t=N$. By Lemma \ref{lemma: criterion}, there exists a monomial $m\in S$ with the total degree of $x_N, x_{l+1}$ at least $d-1$, and the degree of $x_{l+2}$ zero. We write $m=x_N^v x_{l+1}^{d-v-1} x_{t'}$ with $t'\ne l+2$. By taking restrictions to $G_2$, we have $\lambda_{t'}\big{|}_{G_2}=\lambda_{l+2}\big{|}_{G_2}$, which implies that $t'\notin \{l, l+1, \cdots, N\}$. We also have $\lambda_{t'}(g_0)=\lambda_t(g_0)^{-v}=\lambda_{l}(g_0)^{uv}$, which implies that $t'\notin\{1,2,\cdots, l-1\}$. Which is a contradiction.
\end{rmk}

\begin{rmk}
\label{remark: lift}
In Theorems \ref{theorem: abelian} and \ref{theorem: main} we require the actions to be liftable. This condition actually holds in most situations. See Proposition \ref{proposition: lift} below for the case of abelian automorphism groups as we consider. There are more general results for liftability of (non-necessarily abelian) automorphism groups of smooth hypersurfaces, see \cite{oguiso2019quintic} (Theorem 4.8) and the very recent work \cite{gonzalez2020lift} by Gonz\'{a}lez-Aguilera, Liendo and Montero.
\end{rmk}

\begin{prop}
\label{proposition: lift}
Given integers $d\ge 3$ and $N\ge 3$. Let $G$ be an abelian finite group acting faithfully and linearly on a smooth $(N-2)$-fold of degree $d$. Suppose that there exists an element $g\in G$ such that the order of $G/\langle g\rangle$ is coprime to $\gcd(d,N)$. Then the action is liftable.
\end{prop}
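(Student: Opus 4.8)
The plan is to reduce liftability to the vanishing of a commutator pairing, and then to pin that pairing down with two coprime divisibility constraints. Fix the projective representation $\rho\colon G\to\PGL(N,\CC)$ given by the action, and for each $a\in G$ choose a lift $\widetilde{a}\in\GL(N,\CC)$ of $\rho(a)$. Since $G$ is abelian, $\rho(a)$ and $\rho(b)$ commute in $\PGL(N,\CC)$, so the commutator $\beta(a,b):=\widetilde{a}\,\widetilde{b}\,\widetilde{a}^{-1}\widetilde{b}^{-1}$ is a scalar matrix, which I identify with an element of $\CC^{\times}$. One checks directly that $\beta(a,b)$ does not depend on the chosen lifts (rescaling $\widetilde a$ by a scalar is central), that $\beta$ is alternating, and that it is bilinear; for instance $\beta(a,bc)=\beta(a,b)\,\beta(a,c)$ follows by conjugating one commutator by $\widetilde b$, which leaves a central scalar unchanged. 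The action is liftable precisely when $\beta\equiv 1$: if every commutator is trivial then the lifts of a generating set $g_1,\dots,g_r$ of $G$ literally commute, and after rescaling each $\widetilde{g_i}$ by a suitable root of unity so that its order equals $\ord(g_i)$, the assignment $g_i\mapsto\widetilde{g_i}$ extends to a homomorphism $G\to\GL(N,\CC)$ lying over $\rho$.

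Next I would bound $\beta$ by two integers. Taking determinants in $\beta(a,b)\cdot\Id=\widetilde a\,\widetilde b\,\widetilde a^{-1}\widetilde b^{-1}$ gives at once $\beta(a,b)^{N}=1$. For the second bound I use the defining polynomial $F$. Writing $M\cdot F:=F\circ M^{-1}$, every lift satisfies $\widetilde a\cdot F=\chi(a)F$ for some $\chi(a)\in\CC^{\times}$, because $\rho(a)$ preserves $V(F)$ and $F$ is reduced; moreover $M\mapsto\chi(M)$ is multiplicative, hence trivial on commutators. Applying this to $\beta(a,b)\cdot\Id$ and using that $F$ is homogeneous of degree $d$, so that $(\beta(a,b)\cdot\Id)\cdot F=\beta(a,b)^{-d}F$, yields $\beta(a,b)^{-d}=\chi\big(\beta(a,b)\cdot\Id\big)=1$, hence $\beta(a,b)^{d}=1$. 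Combining the two relations, $\beta(a,b)^{\gcd(d,N)}=1$ for all $a,b\in G$.

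Finally I would exploit the hypothesis on $g$. Let $q=\ord\big(G/\langle g\rangle\big)$, which is coprime to $m:=\gcd(d,N)$. Every $x\in G$ satisfies $x^{q}\in\langle g\rangle$, so by bilinearity $\beta(a,b)^{q^{2}}=\beta(a^{q},b^{q})$, and the right-hand side is $1$ because $\beta$ restricted to the cyclic group $\langle g\rangle$ is alternating bilinear, hence $\beta(g^i,g^j)=\beta(g,g)^{ij}=1$. Thus $\beta(a,b)$ is annihilated by both $m$ and $q^{2}$, and since $\gcd(m,q)=1$ we obtain $\beta(a,b)=1$ for all $a,b\in G$, so the action is liftable. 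The crux of the argument is the pair of relations $\beta^{N}=1$ and $\beta^{d}=1$: the first is immediate from determinants, but the second genuinely uses that $F$ is a degree-$d$ form scaled by the group; it is their combination into $\gcd(d,N)$ that makes the coprimality hypothesis on $G/\langle g\rangle$ exactly strong enough to force $\beta\equiv 1$.
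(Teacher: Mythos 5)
Your proof is correct, and while it rests on the same two key scalar relations as the paper's --- $\beta(a,b)^N=1$ from determinants and $\beta(a,b)^d=1$ from the action on the defining polynomial, which the paper derives in exactly the same way for its scalar $\rho$ --- the way you exploit the hypothesis on $g$ is genuinely different. The paper uses the hypothesis structurally: it chooses a cyclic decomposition $G=G_1\oplus\cdots\oplus G_k$ adapted so that $|G_2|,\dots,|G_k|$ are coprime to $\gcd(d,N)$ (a small structure-theoretic fact it leaves implicit: for each prime $p\mid\gcd(d,N)$ the $p$-part of $G$ lies in $\langle g\rangle$, hence is cyclic), lifts each generator with exact order $n_i$, and kills the commutator scalar pairwise via $\rho^{n_i}=\rho^{n_j}=1$, obtained by taking $n_i$-th powers of $\widetilde{g_j}^{-1}\widetilde{g_i}\widetilde{g_j}=\rho\,\widetilde{g_i}$. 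You instead package the commutator scalars into a well-defined alternating bilinear pairing $\beta\colon G\times G\to\CC^{\times}$ and dispatch the group theory in one stroke: $x^q\in\langle g\rangle$ for $q=|G/\langle g\rangle|$, bilinearity gives $\beta(a,b)^{q^2}=\beta(a^q,b^q)=1$ since $\beta$ vanishes on the cyclic group $\langle g\rangle$, and coprimality of $q$ with $\gcd(d,N)$ forces $\beta\equiv 1$. This buys you independence from the adapted decomposition (and from choosing exact-order lifts before commutativity is known), at the price of verifying bilinearity; the paper's version avoids the pairing formalism but quietly needs the decomposition lemma. One small point to tighten: in your reduction ``liftable iff $\beta\equiv 1$,'' an arbitrary generating set does not suffice for the extension step, since relations among the $g_i$ beyond their orders could obstruct it; you should take $g_1,\dots,g_r$ to be a basis realizing a cyclic decomposition $G=\bigoplus_i\langle g_i\rangle$, and then with commuting lifts of exact orders $\ord(g_i)$ the assignment extends to a homomorphism, exactly as in the paper's final step.
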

\begin{proof}
We can take a decomposition $G=G_1\oplus\cdots\oplus G_k$ with each $G_i$ ($i=1, \cdots, k$) cyclic, such that the orders $|G_2|, \cdots, |G_k|$ are all coprime to $(d, N)$. Denote by $n_i$ the order of $G_i$. For each $i=1, \cdots, k$, we take $g_i$ to be a generator of $G_i$ and take its preimage $\widetilde{g_i}$ in $\GL(N, \CC)$ with order $n_i$. 
\smallskip

For any $\{i, j\}\subset\{1, \cdots, k\}$, we have $\widetilde{g_i}\widetilde{g_j}=\rho \widetilde{g_j}\widetilde{g_i}$ for a complex number $\rho\ne 0$. Taking determinant of $\widetilde{g_i}\widetilde{g_j}$ and $\rho\widetilde{g_j}\widetilde{g_i}$, we have that $\rho^N=1$. By looking at the action of $\widetilde{g_i}\widetilde{g_j}$ and $\rho\widetilde{g_j}\widetilde{g_i}$ on the polynomial defining the invariant $(N-2)$-fold of degree $d$, we have that $\rho^d=1$. We have $\widetilde{g_j}^{-1} \widetilde{g_i}\widetilde{g_j}=\rho\widetilde{g_i}$. Taking $n_i$-th power of both side, we have $\rho^{n_i}=1$. Similarly we have $\rho^{n_j}=1$. Since one of $n_i, n_j$ is coprime to $(d, N)$, we must have $\rho=1$.
\smallskip

From the above argument we conclude that each two of $\widetilde{g_1}, \cdots, \widetilde{g_k}$ commute. These $k$ elements then generate an abelian subgroup of $\GL(N, \CC)$ which is a lifting of $G$. 
\end{proof}

Next we apply Theorem \ref{theorem: main} to determine all possible orders of linear automorphisms of smooth hypersurfaces:
\begin{thm}
\label{theorem: order}
Given integers $d\ge 3$ and $N\ge 3$. Suppose $n$ is the order of a linear automorphism of a smooth $(N-2)$-fold of degree $d$, then $n$ is also the order of a linear automorphism of certain $(N-2)$-fold of degree $d$ defined by a simple polynomial $F$ with at most one component of type $T_{\ge 2}$. Explicitly, $n$ is either a factor of 
\begin{enumerate}[(i)]
\item $\frac{|1-(1-d)^N|}{d}$, or
\item $(d-1)^{N-1}$, or
\item $|1-(1-d)^a|$, where $1\le a\le N-1$, or
\item $\mathrm{lcm}(|1-(1-d)^{a_1}|, \cdots, |1-(1-d)^{a_t}|)$, where $t\ge 2, 1\le a_1<\cdots< a_t$ and $\sum_{i=1}^t a_i\le N$, or
\item $\mathrm{lcm}(|1-(1-d)^{a_1}|, \cdots, |1-(1-d)^{a_t}|, (d-1)^{b-1})$, where $t\ge 1, 1\le a_1<\cdots< a_t, b\ge 2$,  and $\sum_{i=1}^t a_i+b\le N$. 
\end{enumerate}
\end{thm}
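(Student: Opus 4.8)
The plan is to reduce immediately to the cyclic case and feed it into Theorem \ref{theorem: main}. Suppose $\sigma$ is a linear automorphism of order $n$ of a smooth $(N-2)$-fold $V(F)$ of degree $d$, and let $G=\langle\sigma\rangle$ be the cyclic group it generates. Since any linear action of a cyclic group is liftable, and since the trivial decomposition $G=G\oplus\{e\}$ has first factor cyclic and second factor of order $1$ (hence coprime to $d-1$), the hypotheses of Theorem \ref{theorem: main} are met. I would therefore invoke Theorem \ref{theorem: main} to produce a simple polynomial $F_0(x_1,\dots,x_{N_0})$, with $N_0\le N$ and at most one component of type $T_{\ge 2}$, into whose group $G_{F_0}$ of diagonal automorphisms the group $G$ embeds; in particular $G_{F_0}$ contains an element of order $n$.

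To obtain the first assertion — that $n$ is realized on a simple $(N-2)$-fold — I would pad $F_0$ up to $N$ variables by adjoining Fermat terms $x_{N_0+1}^d+\dots+x_N^d$. The resulting polynomial $F$ is again simple of degree $d$ (so $V(F)$ is a smooth $(N-2)$-fold), it still has at most one component of type $T_{\ge 2}$ because each adjoined term is of type $K_1$, and extending diagonal automorphisms by the identity on the new coordinates gives an inclusion $G_{F_0}\hookrightarrow G_F$. Hence $G_F$ has an element of order $n$, which is a diagonal, and in particular linear, automorphism of $V(F)$.

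For the explicit list I would read off the type of $F_0$ as $K_{c_1}\oplus\dots\oplus K_{c_t}$ together with at most one block $T_b$ ($b\ge 2$). When $F_0$ has a single block I apply Lemma \ref{lemma: klein} or Lemma \ref{lemma: type T} directly: a single $K_{c_1}$ gives a cyclic $G_{F_0}$ of order $|1-(1-d)^{c_1}|/d$, and a single $T_b$ gives one of order $(d-1)^{b-1}$. When $F_0$ has at least two blocks I apply Proposition \ref{proposition: simple} to identify $G_{F_0}$ with a quotient of $\prod_i \ZZ/(|1-(1-d)^{c_i}|)\ZZ\times \ZZ/(d(d-1)^{b-1})\ZZ$ by a diagonal element of order $d$, and then Lemma \ref{lemma: order} shows that the exponent of $G_{F_0}$ is exactly the least common multiple of the block orders. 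In every case $n$ divides this exponent.

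The remaining work is purely arithmetic, and this is where the care is needed: I would use the congruence $(1-d)^a\equiv 1\pmod d$, i.e. $d\mid |1-(1-d)^a|$ for all $a\ge 1$, to make three reductions. First, since $|1-(1-d)^{c_1}|/d$ divides $|1-(1-d)^{c_1}|$, a single Klein block lands in case (i) when it fills all $N$ variables ($c_1=N$) and in case (iii) otherwise ($c_1\le N-1$); this split between (i) and (iii) — the appearance or disappearance of the factor $\tfrac1d$ according to whether there is room to pad with a Fermat variable — is the subtle point to get right. Second, because some $K$-block makes the relevant lcm divisible by $d$, any Fermat ($K_1$) contributions are absorbed, so after deleting repeated block sizes the all-$K$ case with $\ge 2$ distinct sizes is case (iv) and the case with one $T_b$ is case (v). Third, in the presence of a $K$-block the same divisibility lets me replace $d(d-1)^{b-1}$ by $(d-1)^{b-1}$ inside the lcm, which is exactly the form stated in case (v); the size constraints $\sum a_i\le N$ and $\sum a_i+b\le N$ hold since distinct block sizes sum to at most $N_0\le N$. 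The genuine mathematical content sits in Theorem \ref{theorem: main}, Proposition \ref{proposition: simple}, and Lemma \ref{lemma: order}; the main obstacle here is only the bookkeeping that correctly distributes the structural possibilities for $F_0$ among the five listed shapes, particularly the (i)/(iii) dichotomy and the $d(d-1)^{b-1}\rightsquigarrow(d-1)^{b-1}$ reduction.
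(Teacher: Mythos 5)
Your proposal is correct and, in substance, it is the paper's own proof: the paper likewise funnels everything through Theorem \ref{theorem: main} (splitting into $\gcd(n,d-1)=1$, handled by Theorem \ref{theorem: abelian}, and $\gcd(n,d-1)\ne 1$, handled by Theorem \ref{theorem: main}; your uniform invocation of Theorem \ref{theorem: main} with the trivial decomposition $G=\langle\sigma\rangle\oplus\{e\}$ is legitimate and amounts to the same case split, since the proof of Theorem \ref{theorem: main} itself branches on exactly this), and then reads off the five cases from Proposition \ref{proposition: simple} and Lemma \ref{lemma: order}, with the same arithmetic reductions, including absorbing $d(d-1)^{b-1}$ into $(d-1)^{b-1}$ via $d\mid |1-(1-d)^a|$.

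One step is stated inaccurately, though it does not damage the conclusion: extending diagonal automorphisms by the identity on the padding coordinates does \emph{not} give a group inclusion $G_{F_0}\hookrightarrow G_F$. Any element of $G_{F_0}$ does admit a lift $\widetilde g\in \GL(N_0,\CC)$ with $\widetilde g(F_0)=F_0$ (rescaling by $c$ changes the multiplier by $c^{\pm d}$, which sweeps $\CC^{\times}$), and $\diag(\widetilde g, I)$ then defines an element of $G_F$; but the matrix order of every invariant lift may strictly exceed the order $n$ of the class, so the resulting element of $G_F$ need not have order $n$, and the assignment is not a homomorphism. Concretely, for $d=3$ and $F_0$ of type $K_3$, the group $G_{F_0}\cong \ZZ/3\ZZ$ is generated by $\diag[\zeta_9:\zeta_9^{-2}:\zeta_9^{4}]$, every invariant lift has matrix order $9$, and the identity-extension of this order-$3$ generator has order $9$ in $G_F$ for $F$ of type $K_3\oplus K_1$. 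The repair is one line: the extended class has order some multiple $n''$ of $n$, so its $(n''/n)$-th power has order exactly $n$; alternatively, apply Proposition \ref{proposition: simple} and Lemma \ref{lemma: order} directly to the padded polynomial (which now has at least two blocks, so Lemma \ref{lemma: order} applies) and observe that $n$ divides the lcm of the block orders. The paper's proof glosses over this padding entirely, so your making it explicit, with this small fix, is if anything an improvement.
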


\begin{proof}
Let $g$ be a linear automorphism of a smooth hypersurface $V(F)$ of degree $d$ in $\PP^{N-1}$. First we assume $\ord(g)$ is not coprime to $d-1$. From Theorem \ref{theorem: main}, the group $\langle g\rangle$ is isomorphic to a subgroup of $G_F$ for $F=F(x_1, \cdots, x_N)$ a simple polynomial of degree $d$ and with exactly one component of type $T_{\ge 2}$ (assume it to be $T_b$ with $b\ge 2$). Thus the order $\ord(g)$ of $g$ is equal to the order of a diagonal automorphism of $V(F)$. From Proposition \ref{proposition: simple} and Lemma \ref{lemma: order}, the number $\ord(g)$ belong to case $(ii)$ (when $b=N$) or case $(v)$ (when $b<N$).

\smallskip

Suppose $(\ord(g), d-1)=1$. By Theorem \ref{theorem: abelian}, $\ord(g)$ is equal to the order of a linear automorphism of an $(N-2)$-fold of degree $d$ which is defined by a $K$-pure simple polynomial $F$. Then by Proposition \ref{proposition: simple} and Lemma \ref{lemma: order}, the number $\ord(g)$ belong to case $(i)$ (when $F$ is Klein) or cases $(iii), (iv)$ (when $F$ is not Klein). The theorem follows.
\end{proof}

\begin{rmk}
When $d-1$ is a prime power and $G$ is a cyclic group, the two conditions $(a), (b)$ in Theorem \ref{theorem: main} automatically hold. In this case, Theorem \ref{theorem: order} is a corollary of Theorem \ref{theorem: main}  combining with Proposition \ref{proposition: simple}. However, in Theorem \ref{theorem: order} we do not make any assumptions on the degree $d$.
\end{rmk}

\section{Application to Cubic Fourfolds}
\label{section: cubic}
We end up with an application to cubic fourfolds. Due to its close relation to hyper-K\"ahler geometry and rationality problems, cubic fourfold has appealed a lot of interest recently. In \cite{gonzalez}, Gonz\'{a}lez-Aguilera and Liendo show that orders of automorphisms of smooth cubic fourfolds only have prime factors $2,3,5,7, 11$. An automorphism $g$ of a smooth cubic fourfold $X$ is called symplectic if the induced action on $H^{3,1}(X)\cong \CC$ is trivial. Lie Fu \cite{fu2016classification} classified all symplectic automorphisms of smooth cubic fourfolds whose orders are prime-powers. In \cite{AutCubics}, Laza and the author classified symplectic automorphism groups for smooth cubic fourfolds. In particular, we know that all possibilities for the orders of symplectic automorphisms are $1$, $2$, $3$, $4$, $5$, $6$, $7$, $8$, $9$, $11$, $12$, $15$. In next proposition we give all possible orders for automorphisms of smooth cubic fourfolds.

\begin{prop}
\label{proposition: cubic}
Let $n$ be the order of an automorphism of a smooth cubic fourfold. Then $n$ is a factor of $21, 30, 32, 33, 36$ or $48$. Each of those six integers can be realized as the order of an automorphism of a smooth cubic fourfold that is unique up to isomorphism. Moreover, we exhibit the constructions explicitly in Table \ref{table: cubic fourfold}.
\end{prop}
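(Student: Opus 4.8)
The plan is to separate the statement into three parts: the divisibility bound on every order, the realization of the six numbers, and the uniqueness of the realizing cubic fourfolds.

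\textbf{The bound.} First I would observe that every automorphism of a smooth cubic fourfold is linear (Matsumura--Monsky, since $N=6\ge 4$ and $(d,N)\ne(4,4)$) and generates a cyclic, hence liftable, group, so that Theorem \ref{theorem: order} applies with $(d,N)=(3,6)$, i.e. $d-1=2$ and $1-d=-2$. Enumerating the five families then gives: (i) $\tfrac{|1-(-2)^6|}{3}=21$; (ii) $2^5=32$; (iii) $|1-(-2)^a|\in\{3,9,15,33\}$ for $1\le a\le 5$; (iv) only $\lcm$'s lying in $\{3,9,15,33\}$ (distinct $a_i$ with $\sum a_i\le 6$); and (v) $\lcm(\,|1-(-2)^{a_i}|,\,2^{b-1}\,)\in\{6,12,18,24,30,36,48\}$ (with $\sum a_i+b\le 6$). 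The set of all achievable values is $\{3,6,9,12,15,18,21,24,30,32,33,36,48\}$, whose maximal elements under divisibility are exactly $21,30,32,33,36,48$ (for instance $15\mid 30$, $18\mid 36$, $24\mid 48$); since by Theorem \ref{theorem: order} every order divides an achievable value, it divides one of these six numbers.

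\textbf{Realization.} For each order I would exhibit the simple polynomial realizing the corresponding extremal case of Theorem \ref{theorem: order}:
\begin{align*}
n=21:&\quad x_1^2x_2+x_2^2x_3+x_3^2x_4+x_4^2x_5+x_5^2x_6+x_6^2x_1 \quad(K_6),\\
n=30:&\quad x_1^2x_2+x_2^3+x_3^2x_4+x_4^2x_5+x_5^2x_6+x_6^2x_3 \quad(T_2\oplus K_4),\\
n=32:&\quad x_1^2x_2+x_2^2x_3+x_3^2x_4+x_4^2x_5+x_5^2x_6+x_6^3 \quad(T_6),\\
n=33:&\quad x_1^2x_2+x_2^2x_3+x_3^2x_4+x_4^2x_5+x_5^2x_1+x_6^3 \quad(K_5\oplus K_1),\\
n=36:&\quad x_1^2x_2+x_2^2x_3+x_3^3+x_4^2x_5+x_5^2x_6+x_6^2x_4 \quad(T_3\oplus K_3),\\
n=48:&\quad x_1^2x_2+x_2^2x_3+x_3^2x_4+x_4^2x_5+x_5^3+x_6^3 \quad(T_5\oplus K_1).
\end{align*}
Each is smooth by Proposition \ref{proposition: smooth criterion}, and by Proposition \ref{proposition: simple} together with Lemma \ref{lemma: order} the diagonal group $G_{F_0}$ contains an element whose order is the relevant $\lcm$, namely $21,30,32,33,36,48$ respectively; this populates Table \ref{table: cubic fourfold}.

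\textbf{Uniqueness (the crux).} Given a smooth $X=V(F)$ carrying an automorphism $g$ of one of these orders $n$, I would diagonalize a finite-order lift $\tilde g=\diag(\zeta^{a_1},\dots,\zeta^{a_6})\in\GL(6,\CC)$, so that $F$ is a $\tilde g$-eigenvector and every monomial of $F$ shares one weight $w$ modulo $\ord(\tilde g)$. The first key point is that the decomposition type is forced by the arithmetic of $n$: each $|1-(-2)^a|$ is odd, so the whole $2$-part of $n$ must come from a single type-$T$ block (recall Theorem \ref{theorem: main} allows at most one $T_{\ge 2}$ component), pinning its length $b$ via $2^{b-1}$; meanwhile the odd part of $n$, through the first occurrences of the relevant prime powers in the sequence $|1-(-2)^a|$ (e.g.\ $5,7,11$ first appear at $a=4,6,5$, and $3^2$ at $a=3$), forces the Klein-block lengths. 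This recovers exactly the type of the polynomial $F_0$ above. Running the mechanism of the proof of Theorem \ref{theorem: main} (each variable occurs to power $\ge d-1=2$ in some monomial, via Lemma \ref{lemma: criterion}) then fixes the weight vector $(a_1,\dots,a_6)$ up to the relabelings intrinsic to the $K$- and $T$-cycles of Lemmas \ref{lemma: klein} and \ref{lemma: type T}. The second key point is a finite computation: for each forced weight vector, among the $\binom{8}{3}=56$ cubic monomials the only ones of weight $w$ are precisely those of $F_0$, whence $F\in\Span(S(F_0))$. Finally smoothness forces every coefficient of $F$ to be nonzero (deleting any monomial of a $K$- or $T$-component produces a singular point, as in Proposition \ref{proposition: smooth criterion}), and a diagonal coordinate change commuting with $g$ rescales all coefficients to $1$, giving $X\cong V(F_0)$. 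The main obstacle is exactly this uniqueness step: for the smaller orders $21$ and $30$ the eigenvalue constraints are looser, so ruling out competing weight systems and carrying out the weight-$w$ monomial count demands the most care.
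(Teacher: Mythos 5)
Your first two parts coincide with the paper's own proof: the same enumeration of cases (i)--(v) of Theorem \ref{theorem: order} at $(d,N)=(3,6)$ for the divisibility bound, and the same six simple polynomials (your $n=33$ entry is the paper's $K_1\oplus K_5$ example up to a permutation of coordinates), with the existence of an order-$n$ diagonal automorphism extracted from Proposition \ref{proposition: simple} and Lemma \ref{lemma: order}. Both parts are correct and essentially identical to the paper.

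The uniqueness step, however, has a genuine gap exactly where you write ``running the mechanism of the proof of Theorem \ref{theorem: main} \dots fixes the weight vector.'' Theorem \ref{theorem: main} only yields an \emph{abstract} isomorphism of $\langle g\rangle$ onto a subgroup of $G_{F_0}$ for some simple $F_0$ in $N_0\le 6$ variables; it does not identify the eigenvalues of a lift $\tilde g$ on the \emph{original} coordinates. Indeed, in the case $k>l+1$ of that proof, $F_0$ is assembled using brand-new variables $y_1,\dots,y_{l+1}$ and there is no inclusion $S(F_0)\subset S(F)$; moreover a priori $N_0$ could be smaller than $6$. Your arithmetic forcing of the type (oddness of $|1-(-2)^a|$, a single $T$-block carrying the $2$-part, first occurrences of $5,11,3^2$) correctly pins down the type of the abstract $F_0$, but not the weight vector of $g$ on $\PP^5$: you never exclude smooth $F$ whose monomial structure is not simple, and the assignment $i\mapsto j(i)$ with $x_i^2x_{j(i)}\in S(F)$ need not be injective (Proposition \ref{proposition: smooth criterion} rules out non-injective patterns only for the pure polynomial $F_I$, not for an $F$ containing additional monomials), so your ``finite computation'' on forced weight vectors has no justified starting point. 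The paper closes precisely this gap with two steps absent from your proposal: first, Theorem \ref{theorem: order} applied at $(d,N)=(3,5)$ shows every automorphism order of a smooth cubic threefold divides $11,15,16,18$ or $24$, hence never equals $n\in\{21,30,32,33,36,48\}$, which forces $N_0=6$; second, Remark \ref{remark: cubic fourfold} (a refinement special to $d=3$: if $k>l+1$ then $N_1\le N-2$, so $N_0\le 5$) forces $k=l+1$, the one branch of the proof of Theorem \ref{theorem: main} in which $F_0$ is built from monomials of the character set $S$ in the original coordinates, so that $g$ itself acts on $V(F_0)$ with the tabulated weights (uniquely up to conjugation, since $G_{F_1}$ is cyclic of order exactly $n$). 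Only after these two inputs do your remaining steps --- the weight-$w$ monomial count showing $S(F)=S(F_0)$, nonvanishing of all coefficients by smoothness, and diagonal rescaling --- legitimately conclude $V(F)\cong V(F_1)$; those final steps do agree with the paper's.
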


\begin{proof}
Firstly, by \cite{matsumura1963automorphisms}, any automorphism of a smooth cubic fourfold is linear and of finite order. We apply Theorem \ref{theorem: order} to the case $(d, N)=(3, 6)$. In case $(i)$, $n$ is a factor of $\frac{|1-(1-d)^6|}{d}=21$. In case $(ii)$, $n$ is a factor of $(d-1)^5=32$. The function $|1-(1-d)^a|$ takes values $3,3,9,15,33$ for $a=1,2,3,4,5$ respectively. Thus in case $(iii)$, $n$ can be a factor of $9, 15$ or $33$.
\smallskip

In case $(iv)$, there are choices $(a_1, \cdots, a_t)=(1,5), (2,4), (1,4), (2,3), (1,3),(1,2)$ or $(1,2,3)$. Straightforward calculations show that $\lcm(|1-(1-d)^{a_1}|,\cdots, |1-(1-d)^{a_t}|)=3,9$, $15$ or $33$. Thus in this case $n$ is a factor of $9$, $15$ or $33$.
\smallskip

Finally, in case $(v)$ we have choices $(a_1, \cdots, a_t, b)=(1,5)$, $(2,4)$, $(1,4)$, $(3,3)$, $(2,3)$, $(1,3)$, $(1,2,3)$, $(4,2)$, $(3,2)$, $(2,2)$, $(1,2)$, $(1,3,2)$, $(1,2,2)$. Straightforward calculations show that $\lcm(|1-(1-d)^{a_1}|,\cdots, |1-(1-d)^{a_t}|, (d-1)^{b-1})=6, 12, 18, 24, 30, 36$ or $48$. Thus the order $n$ is a factor of $30,36$ or $48$. We conclude that $n$ must be a factor of $21, 30, 32, 33, 36$ or $48$. Those six numbers are realized as orders of automorphisms of simple polynomials in the following table:
\begin{table}[ht] \caption{Here we denote by $\frac{1}{n}(\sigma_1, \cdots, \sigma_6)$ the automorphism of $\CC^6$ sending $(x_1, \cdots, x_6)$ to $(\zeta_n^{\sigma_1}x_1, \cdots, \zeta_n^{\sigma_6} x_6)$. Here $\zeta_n=\mathrm{exp}(\frac{2\pi\sqrt{-1}}{n})$.}
\label{table: cubic fourfold}
\renewcommand{\arraystretch}{1.2}\centering
\begin{tabular}{|c|c|c|c|}
\hline
Order  & Cubic Polynomial   & Maximal Automorphism  \\ [0.5ex]
\hline
$21$& $x_1^2 x_2+ x_2^2 x_3+ x_3^2 x_4+ x_4^2 x_5+ x_5^2 x_6+ x_6^2 x_1$ & $\frac{1}{63}(1, -2, 4, -8, 16, -32)$\\\hline
$30$& $x_1^2 x_2+ x_2^3 + x_3^2 x_4+ x_4^2 x_5+x_5^2 x_6+x_6^2 x_3$ & $\frac{1}{30}(15, 0, 2, -4, 8, -16)$ \\\hline
$32$& $x_1^2 x_2+ x_2^2 x_3+ x_3^2 x_4+ x_4^2 x_5+ x_5^2 x_6+ x_6^3$ & $\frac{1}{32}(1, -2, 4, -8, 16, 0)$   \\\hline
$33$& $x_1^3+ x_2^2 x_3+x_3^2 x_4+x_4^2 x_5+x_5^2 x_6+x_6^2 x_2$ & $\frac{1}{33}(11, 3, -6, 12, 9, -18)$  \\\hline
$36$& $x_1^2 x_2+ x_2^2 x_3+ x_3^3+ x_4^2 x_5+ x_5^2 x_6+ x_6^2 x_4$ & $\frac{1}{36}(9, -18, 0, 4, -8, 16)$ \\\hline
$48$& $x_1^2 x_2+ x_2^2 x_3+ x_3^2 x_4+ x_4^2 x_5+ x_5^3+ x_6^3$ & $\frac{1}{48}(3, -6, 12, -24, 0, 16)$     \\\hline
\end{tabular}
\end{table}
\smallskip

Given $n\in \{21, 30, 32, 33, 36, 48\}$, we denote by $F_1(x_1, \cdots, x_6)$ the simple polynomial in Table \ref{table: cubic fourfold} corresponding to $n$. Next we show the uniqueness of cubic fourfolds that admitting an automorphism of order $n$. Take $F(x_1, \cdots, x_6)$ to be a smooth cubic polynomial such that $V(F)$ admits a diagonal automorphism of order $n$. We aim to show that $V(F)$ and $V(F_1)$ are isomorphic to each other. Applying Theorem \ref{theorem: order} for $(d,N)=(3, 5)$, we know that the order of an automorphism of a smooth cubic threefold is a factor of $11,15,16,18$ or $24$. Thus the order of each automorphism of a smooth cubic threefold does not equal to $n\in \{21, 30, 32, 33, 36, 48\}$. Therefore, when we apply Theorem \ref{theorem: main} for $(d,N)=(3,6)$, the number $N_0$ must be equal to $6$. By Remark \ref{remark: cubic fourfold}, we have $k=l+1$ in the proof of Theorem \ref{theorem: main}. This allows us to make the choice of $F_0$ such that the two cubic fourfolds $V(F), V(F_0)\subset \PP^5_{x_1, \cdots, x_6}$ are preserved by the same automorphism $g\in \PGL(N, \CC)$ of order $n$.

\smallskip

Now we have two simple cubic polynomials $F_0, F_1$ in variables $x_1, \cdots, x_6$, such that both $G_{F_0}$ and $G_{F_1}$ contain elements with order $n$. From our calculation of the maximal numbers $21,30,32,33,36,48$, simple cubic polynomials in $(x_1, \cdots, x_6)$ with diagonal automorphisms of order $n$ are unique up to permutation of coordinates. Therefore, we can assume that $S(F_0)=S(F_1)$.
\smallskip

The types of $F_1$ for $n=21, 30, 32, 33, 36, 48$ are $K_6, T_2\oplus K_4, T_6, K_1\oplus K_5, T_3\oplus K_3, T_5\oplus K_1$ respectively. By Proposition \ref{proposition: simple}, the group $G_{F_1}$ is isomorphic to $\ZZ/21$, $\ZZ/30$, $\ZZ/32$, $\ZZ/33$, $\ZZ/36$, $\ZZ/48$ respectively. Therefore, diagonal automorphism of $V(F_1)$ of order $n$ is unique up to conjugation. Without loss of generality, we assume that the action of $g$ on $V(F_0)$ coincides with the explicitly described one in Table \ref{table: cubic fourfold}. We claim that $S(F)=S(F_0)$. The proof is straightforward and we take $n=48$ for an example. Suppose to the contrary that there exists a monomial $m\in S(F)-S(F_0)$. If the degree of certain $x_i$ is at least $2$ in $m$, then there is a unique $x_j$ with $g(x_i^2 x_j)=x_i^2 x_j$. Thus $m$ must belong to $S(F_0)$, which is a contradiction. Now assume $m=x_{i_1}x_{i_2}x_{i_3}$ with $i_1<i_2<i_3$. However, $g=\frac{1}{48}(3, -6, 12, -24, 0, 16)$ and the sum of any three distinct numbers in $\{3,-6,12,-24,0,16\}$ is not zero. This is a contradiction. 
\smallskip

Now we have $S(F)=S(F_1)$. By taking scalars on $x_1, \cdots, x_N$, the two polynomials $F$ and $F_1$ can be transformed to each other. Thus $V(F)$ is isomorphic to $V(F_1)$. We conclude the uniqueness.
\end{proof}

\bibliography{reference} 

\end{document}